\def\co{\colon\thinspace}
\DeclareMathAlphabet{\mathsfsl}{OT1}{cmss}{m}{sl}
\newcommand{\tensor}[1]{\mathsfsl{#1}}
\newcommand{\spin}{\mathrm{Spin}^c}
\newtheorem{thm}{Theorem}[section]
\newtheorem{lem}[thm]{Lemma}
\newtheorem{cor}[thm]{Corollary}
\newtheorem{prop}[thm]{Proposition}
\theoremstyle{definition}
\newtheorem{defn}[thm]{Definition}
\newtheorem{notn}[thm]{Notation}
\newtheorem{rem}[thm]{Remark}
\newcommand{\Z}{\mathbb{Z}}
\begin{document}

\title{Correction terms, $\mathbb Z_2$--Thurston norm, and triangulations}

\author{{\Large Yi NI}\\{\normalsize Department of Mathematics, Caltech, MC 253-37}\\
{\normalsize 1200 E California Blvd, Pasadena, CA
91125}\\{\small\it Email\/:\quad\rm yini@caltech.edu}\\
{\ }\\
{\Large Zhongtao WU}\\
{\normalsize Department of Mathematics,
The Chinese University of Hong Kong,}\\
{\normalsize Shatin, Hong Kong}\\{\small\it Email\/:\quad\rm ztwu@math.cuhk.edu.hk}
}

\date{}
\maketitle

\begin{abstract}
We show that the correction terms in Heegaard Floer homology give a lower bound to the the genus of one-sided Heegaard splittings and the $\mathbb Z_2$--Thurston norm. Using a result of Jaco--Rubinstein--Tillmann, this gives a lower bound to the complexity of certain closed $3$--manifolds. As an application, we compute the $\mathbb Z_2$--Thurston norm of the double branched cover of some closed 3--braids, and give upper and lower bounds for the complexity of these manifolds.
\end{abstract}

\section{Introduction}

Heegaard Floer homology, introduced by Ozsv\'ath and Szab\'o \cite{OSzAnn1}, has been very successful in the study of low-dimensional topology. One important feature of Heegaard Floer homology which makes it so useful is that it gives a lower bound for the genus of surfaces in a given homology class. In dimension $3$, it determines the Thurston norm \cite{OSzGenus}. In dimension $4$, the adjunction inequality \cite{OSz4Manifold} gives a lower bound to the genus of surfaces which is often sharp, and the concordance invariant \cite{OSz4Genus} gives a lower bound to the slice genus of knots. 

In \cite{NiWu}, we studied a new type of genus bounds. Roughly speaking, given a torsion class $a\in H_1(Y;\mathbb Z)$, where $Y$ is a $3$--manifold, one can consider the minimal rational genus of all knots representing $a$. This defines a function $\Theta$ on the torsion subgroup of $H_1(Y)$, which was introduced by Turaev \cite{TuFunc} as an analogue of Thurston norm \cite{Th}. When the homology class $a$ has order $2$, this $\Theta$ is essentially the minimal genus of embedded nonorientable surfaces in a given $\mathbb Z_2$--homology class. To state the theorem, let us recall that the correction term of a rational homology sphere $Y$ with a Spin$^c$ structure $\mathfrak s\in \spin(Y)$ is a rational number $d(Y,\mathfrak s)$. There is an affine action of $H^2(Y;\mathbb Z)$ on $\spin(Y)$ which is denoted by addition.

The following theorem is essentially \cite[Theorem~3.3]{GreeneNi}, which is an easy corollary of the main theorem in \cite{NiWu}.

\begin{thm}\label{thm:NOgenus}
Let $Y$ be a rational homology $3$--sphere, $$\beta\co H_2(Y;\mathbb Z_2)\to H_1(Y;\mathbb Z)$$ be the Bockstein homomorphism, and $\mathrm{PD}\co H_i(Y)\to H^{3-i}(Y)$ be the Poincar\'e duality map. Suppose that a nonzero class $A\in H_2(Y;\mathbb Z_2)$ is represented by a closed connected nonorientable surface of genus $h$, then
$$h\ge2\max_{\mathfrak s\in\spin(Y)}\big\{d(Y,\mathfrak s+\mathrm{PD}\circ\beta(A))-d(Y,\mathfrak s)\big\}.$$
\end{thm}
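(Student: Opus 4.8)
The plan is to deduce the inequality from the main theorem of \cite{NiWu}, which bounds Turaev's function $\Theta$ on the $2$--torsion subgroup of $H_1(Y;\Z)$ below in terms of the correction terms, by converting the nonorientable surface $F$ representing $A$ into a knot with a controlled rational Seifert surface. To begin, set $a=\beta(A)\in H_1(Y;\Z)$. Since $Y$ is a rational homology sphere, $H_2(Y;\Z)=0$, so in the long exact sequence of $0\to\Z\xrightarrow{\,2\,}\Z\to\Z_2\to 0$ the Bockstein $\beta$ is injective and its image is annihilated by $2$; hence $a$ has order exactly $2$.

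Next I would convert $F$ into a knot. The model case is a standard decomposition $F=\Sigma\cup M$ with $\Sigma$ an orientable subsurface with one boundary circle and $M$ a M\"obius band glued to it along that circle; set $K=$ core of $M$. Computing $\beta_F([F]_{\Z_2})\in H_1(F;\Z)$ by lifting the $\Z_2$--fundamental cycle of $F$ to an integral chain---which can be taken to be a cycle on $\Sigma$---the only defect is along $M$ and equals $[K]$, so by naturality of the Bockstein $\iota_*[K]=\iota_*\beta_F([F]_{\Z_2})=\beta(A)=a$, where $\iota\co F\hookrightarrow Y$. Pushing $\Sigma$ off $K$ into its exterior makes $\partial\Sigma=\partial M$ a curve winding twice---that is, $\mathrm{ord}([K])$ times---around $K$, so $\hat S:=\Sigma$ is a rational Seifert surface for $K$ with $-\chi(\hat S)=2\,\mathrm{genus}(\Sigma)-1=h-2$. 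Therefore $\Theta(a)\le\|K\|\le\max(0,-\chi(\hat S))/(2\cdot 2)=\max(0,h-2)/4$. For a general connected nonorientable $F$ of genus $h$ one reduces to this picture, up to a minor variant when $h$ is even (where $F$ is built from two M\"obius bands rather than one); this is the correspondence between $\mathbb Z_2$--homology classes and nonorientable surfaces recalled in the introduction, implicit in \cite{NiWu}.

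Finally I would invoke \cite{NiWu} and rearrange. Its main theorem, specialized to the order--$2$ class $a$, gives
$$\Theta(a)\ \ge\ \frac12\,\max_{\mathfrak s\in\spin(Y)}\bigl\{d(Y,\mathfrak s+\mathrm{PD}(a))-d(Y,\mathfrak s)\bigr\}\ -\ \frac12 .$$
Assuming $h\ge 2$ and combining with the bound above yields $\tfrac{h-2}{4}\ge\tfrac12\max_{\mathfrak s}\{d(Y,\mathfrak s+\mathrm{PD}\circ\beta(A))-d(Y,\mathfrak s)\}-\tfrac12$, that is, $h\ge 2\max_{\mathfrak s}\{d(Y,\mathfrak s+\mathrm{PD}\circ\beta(A))-d(Y,\mathfrak s)\}$. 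The remaining case $h=1$, in which $F=\mathbb{RP}^2$ forces $Y\cong\mathbb{RP}^3\#Y'$ with $a$ generating the $\mathbb{RP}^3$ summand of $H_1$, follows directly from $d(\mathbb{RP}^3,\cdot)=\pm\tfrac14$ and additivity of $d$ under connected sum.

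I expect the conversion step to be the main obstacle: making precise, for an arbitrary connected nonorientable surface of genus $h$ of either parity, the production of a knot $K$ with $\iota_*[K]=\beta(A)$ as an integral class together with a rational Seifert surface of Euler characteristic exactly $2-h$, and checking that the additive constant in the bound of \cite{NiWu} is precisely $-\tfrac12$ so that the rearrangement lands exactly on the stated inequality. The remaining steps are formal.
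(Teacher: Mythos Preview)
Your approach---reduce to the Ni--Wu rational genus bound (Theorem~\ref{thm:GenusBound}) by extracting from the nonorientable surface a knot $K$ with $[K]=\beta(A)$ together with a rational Seifert surface---is exactly the route the paper takes (it simply cites \cite[Theorem~3.3]{GreeneNi}). So strategically you are on target; the issues are in the bookkeeping.

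First, both of your numerical inequalities are off by a factor of $2$, and they happen to cancel. From the definition $\Theta(a)=\min_K 2g_r(K)$ one gets $\Theta(a)\le \dfrac{-\chi(\hat S)}{|[\partial\hat S]\cdot[\mu]|}=\dfrac{h-2}{2}$, not $(h-2)/4$; and Theorem~\ref{thm:GenusBound} reads $1+\Theta(a)\ge \max_{\mathfrak s}\{d(Y,\mathfrak s+\mathrm{PD}[K])-d(Y,\mathfrak s)\}$, i.e.\ $\Theta(a)\ge \max-1$, not $\tfrac12\max-\tfrac12$. With the correct constants the combination still gives $h\ge 2\max$, and your worry about ``the additive constant being precisely $-\tfrac12$'' disappears. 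In fact it is cleaner to skip $\Theta$ altogether and plug $-\chi(\hat S)=h-2$, $|[\partial\hat S]\cdot[\mu]|=2$ directly into Theorem~\ref{thm:GenusBound}: the left side is $1+\tfrac{h-2}{2}=\tfrac h2$, which immediately yields the statement and also covers $h=1$ without a separate argument.

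Second, the decomposition $F=\Sigma\cup M$ with $\Sigma$ orientable and $M$ a single M\"obius band exists only when $h$ is odd: an Euler characteristic count shows that a one--sided simple closed curve on $N_h$ with orientable complement forces $2-h=1-2g$. For even $h$ one must instead cut along a \emph{two--sided} curve $\gamma$ representing the torsion class in $H_1(F;\Z)$ (equivalently, the Poincar\'e dual of $w_1(F)$); then $F\setminus\nu(\gamma)$ is orientable with two boundary circles, and one has to check that on $\partial\nu_Y(K)$ these are coherently oriented parallel copies with total meridional intersection $2$, and that $\iota_*[\gamma]=\beta(A)$. This is the ``minor variant'' you allude to, but it is not automatic and deserves to be written out; your sketch as it stands does not establish it.
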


A stronger version of the above theorem is obtained by Levine--Ruberman--Strle \cite{LevRubStr}, who proved a nonorientable genus bound in dimensional four.

Bredon and Wood \cite{BW} initiated the study of minimal genus nonorientable surfaces representing a given $\mathbb Z_2$--homology class. They have completely determined the minimal genus for lens spaces $L(2k,q)$. (Lens spaces with odd order do not contain embedded nonorientable surfaces, since such surfaces always represent nontrivial $\mathbb Z_2$--homology classes.) This problem is closely related to one-sided Heegaard splittings introduced by Rubinstein \cite{RubinOS}, since the Heegaard surfaces in this case are nonorientable. As a consequence, Theorem~\ref{thm:NOgenus} gives a lower bound to the one-sided Heegaard genus of a rational homology sphere in terms of Heegaard Floer correction terms, while such kind of bounds are not known for the usual two-sided Heegaard genus.

Nonorientable genus is also closely related to the $\mathbb Z_2$--Thurston norm, which is discussed in Section~\ref{sect:RatGenus}.

\begin{cor}\label{cor:Z2Norm}
If an irreducible rational homology $3$--sphere $Y$ satisfies the condition
\begin{equation}\label{eq:SurjZ2}
\scriptstyle H_1(S;\mathbb Z_2)\to H_1(Y;\mathbb Z_2)  \text{ \small is surjective for any orientable incompressible surface }S\subset Y,
\end{equation}
then the $\mathbb Z_2$--Thurston norm of any $A\in H_2(Y;\mathbb Z_2)$ is bounded below by $$-2+2\max_{\mathfrak s\in\spin(Y)}\big\{d(Y,\mathfrak s+\mathrm{PD}\circ\beta(A))-d(Y,\mathfrak s)\big\}.$$
\end{cor}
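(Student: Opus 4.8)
The plan is to reduce Corollary~\ref{cor:Z2Norm} to Theorem~\ref{thm:NOgenus} by showing that, under hypothesis \eqref{eq:SurjZ2}, a $\mathbb Z_2$--Thurston--norm minimiser of $A$ may be taken \emph{connected}, and that such a connected minimiser is forced to be nonorientable once $\mathrm{PD}\circ\beta(A)\neq0$. First I would dispose of the trivial case: if $\beta(A)=0$ then $\mathrm{PD}\circ\beta(A)=0$, every difference $d(Y,\mathfrak s+\mathrm{PD}\circ\beta(A))-d(Y,\mathfrak s)$ vanishes, the asserted lower bound equals $-2$, and there is nothing to prove since the $\mathbb Z_2$--Thurston norm is non-negative. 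So assume $\beta(A)\neq0$; then $A\neq0$ and $A$ has no orientable representative.

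The crucial input from \eqref{eq:SurjZ2} is the observation that every orientable incompressible surface $S_0\subset Y$ is $\mathbb Z_2$--separating. Indeed \eqref{eq:SurjZ2} makes $H_1(S_0;\mathbb Z_2)\to H_1(Y;\mathbb Z_2)$ surjective, hence the restriction map $H^1(Y;\mathbb Z_2)\to H^1(S_0;\mathbb Z_2)$ injective; and since a two--sided surface has trivial normal bundle, $\mathrm{PD}[S_0]$ restricts to zero on $S_0$, whence $\mathrm{PD}[S_0]=0$. The same injectivity gives a little more: for any nonzero $B\in H_2(Y;\mathbb Z_2)$ and any transverse representative $S'$ of $B$, the class of $S_0\cap S'$ in $H_1(S_0;\mathbb Z_2)$ is nonzero, so $S_0$ cannot be isotoped off any surface carrying a nonzero $\mathbb Z_2$--class.

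Now choose an embedded surface $S$ with $[S]=A$ and $\chi_-(S)=\|A\|_{\mathbb Z_2}$. By the usual innermost--disk and irreducibility arguments I may take $S$ incompressible with no $2$--sphere components (compressing, or discarding a sphere, never raises $\chi_-$ and never changes $[S]$). If $S$ has an $\mathbb{RP}^2$ component then $Y\cong\mathbb{RP}^3=L(2,1)$, which I would check directly: there $A$ is the generator, the asserted bound reads $-1$, and $\|A\|_{\mathbb Z_2}=0$. Otherwise every component of $S$ has $\chi\le0$, so $\chi_-(S)=-\chi(S)$; any orientable component is incompressible, hence $\mathbb Z_2$--separating, hence null-homologous, hence deletable without changing $A$ or raising $\chi_-$, so I may assume $S=P_1\sqcup\cdots\sqcup P_m$ with every $P_j$ incompressible and nonorientable (and therefore one-sided and non-separating). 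If $m\ge2$, consider $\widehat P_1:=\partial N(P_1)$, a connected orientable incompressible surface (incompressibility of the boundary of a regular neighbourhood of a closed incompressible surface is standard in an irreducible $3$--manifold, the few exceptional $Y$ being treated separately). Although $[\widehat P_1]=0$, the observation above forces $\widehat P_1$ to meet $P_2$, since $[P_2]\neq0$; this contradicts that $P_2$ can be isotoped out of $N(P_1)$. Hence $m=1$, so $S$ is a connected nonorientable surface of some genus $h$, and $\|A\|_{\mathbb Z_2}=-\chi(S)=h-2$.

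It then remains to apply Theorem~\ref{thm:NOgenus} to $S$, which gives $h\ge 2\max_{\mathfrak s}\{d(Y,\mathfrak s+\mathrm{PD}\circ\beta(A))-d(Y,\mathfrak s)\}$ and hence $\|A\|_{\mathbb Z_2}=h-2\ge -2+2\max_{\mathfrak s}\{d(Y,\mathfrak s+\mathrm{PD}\circ\beta(A))-d(Y,\mathfrak s)\}$. I expect the main obstacle to be the connectedness step above: it requires both the mod--$2$ cohomological reading of \eqref{eq:SurjZ2} and the geometric fact that $\partial N(P_1)$ is incompressible, plus a careful but routine disposal of the sphere, $\mathbb{RP}^2$, torus and small--manifold exceptional cases. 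Everything else is bookkeeping with Euler characteristics.
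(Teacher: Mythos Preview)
Your overall strategy---reduce to a connected nonorientable representative and then invoke Theorem~\ref{thm:NOgenus}---is the same as the paper's, but the paper gets connectedness in one line by citing Rubinstein's Theorem~\ref{thm:IncompOS}: under~\eqref{eq:SurjZ2} the complement of any nonorientable incompressible surface is an open handlebody, and an open handlebody contains no closed incompressible surface, so a $\mathbb Z_2$--taut representative can have only one (necessarily nonorientable, since $Y$ is a rational homology sphere) component.

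Your hands-on replacement for this step has a genuine gap. The assertion that $\widehat P_1=\partial N(P_1)$ is incompressible is not ``standard''; for one-sided incompressible $P_1$ it is simply false in general. Indeed, under hypothesis~\eqref{eq:SurjZ2} Rubinstein's theorem says $Y\setminus\mathrm{int}\,N(P_1)$ is a handlebody, so $\widehat P_1$ is \emph{always} compressible from that side. Thus in the situation you set up (assuming $m\ge 2$), you cannot apply~\eqref{eq:SurjZ2} to $\widehat P_1$ directly. One can rescue the argument: compress $\widehat P_1$ in $Y\setminus\mathrm{int}\,N(P_1)$ (the compressing disks may be isotoped off the incompressible $P_2$), and observe that the result is either a union of spheres---forcing $Y\setminus\mathrm{int}\,N(P_1)$ to be a handlebody, contradicting the presence of $P_2$---or contains a closed orientable incompressible component $\Sigma_0$ disjoint from $P_2$, to which your intersection observation from~\eqref{eq:SurjZ2} applies and yields the contradiction. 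But this repair is essentially a reproof of the relevant consequence of Theorem~\ref{thm:IncompOS}, so it is cleaner to cite that theorem as the paper does. (Two minor remarks: in a rational homology sphere every closed orientable surface is already $\mathbb Z_2$--null-homologous since $H_2(Y;\mathbb Z)=0$, so~\eqref{eq:SurjZ2} is not needed for that step; and your disposal of the $\beta(A)=0$ and $\mathbb{RP}^2$ cases is fine.)
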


Given a three-manifold  $Y$, the {\it complexity} $C(Y)$ of $Y$ is the minimal number of tetrahedra one needs to triangulate $Y$. This invariant is notoriously hard to compute. Jaco--Rubinstein--Tillmann \cite{JRT} found a lower bound to $C(Y)$ for some $3$--manifolds in terms of the $\mathbb Z_2$--Thurston norm. Using Corollary~\ref{cor:Z2Norm} and its variants, we can compute the $\mathbb Z_2$--Thurston norm for some $3$--manifolds, thus give lower bounds to $C(Y)$. Upper bounds to $C(Y)$ can be obtained by constructing layered-triangulations \cite{JR}. We carry out this computation/construction explicitly for two classes of manifolds.

\begin{prop}\label{prop:CBraid2n}
Let $L$  be the closure of the braid $$\sigma=\sigma_1\sigma_2^{-2a_1}\sigma_1\sigma_2^{-2a_2}\cdots
\sigma_1\sigma_2^{-2a_{2n-1}}\sigma_1\sigma_2^{-2a_{2n}},$$ where $a_i,n>0$, $\Sigma(L)$ be the double branched cover of $S^3$ branched over $L$. Then the complexity $C(\Sigma(L))$ of $\Sigma(L)$ is in the range
$$2n-4+2\sum_{i=1}^{2n}a_i\le C(\Sigma(L))\le 4n+2\sum_{i=1}^{2n}a_i.$$
\end{prop}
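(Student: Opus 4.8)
The plan is to produce $\Sigma(L)$ in an explicit enough form to do two independent things: build an efficient triangulation of $\Sigma(L)$ (for the upper bound) and compute the Heegaard Floer correction terms of $\Sigma(L)$ (for the lower bound, via Corollary~\ref{cor:Z2Norm} together with the Jaco--Rubinstein--Tillmann complexity inequality). Since $L$ is a closed $3$--braid I would apply Birman--Hilden: the double branched cover of the $3$--punctured disc is the once-punctured torus $T_0$ and $B_3$ maps to its mapping class group $SL(2,\mathbb Z)$, with $\sigma_1$ and $\sigma_2$ going to the two standard parabolic generators. Hence $\Sigma(L)$ is a Dehn filling of the once-punctured torus bundle whose monodromy $\varphi$ is the product of $2n$ matrices, the $i$-th being $\left(\begin{smallmatrix}1&1\\0&1\end{smallmatrix}\right)\left(\begin{smallmatrix}1&0\\2a_i&1\end{smallmatrix}\right)$; equivalently, reading off the Bennequin surface of $\sigma$ one presents $\Sigma(L)$ as surgery on an explicit framed link (a chain/plumbing) whose coefficients are dictated by the syllables $\sigma_1$ and $\sigma_2^{-2a_i}$, and I would use whichever description is convenient below. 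Reducing mod $2$ --- the $\sigma_2^{-2a_i}$ factors become the identity --- gives $\varphi\equiv\left(\begin{smallmatrix}1&1\\0&1\end{smallmatrix}\right)^{2n}\equiv I\pmod 2$, so $H_1(\Sigma(L);\mathbb Z_2)\cong(\mathbb Z_2)^2$ and $\Sigma(L)$ is a rational homology sphere (with $|H_1|=\operatorname{tr}\varphi-2$, which is even and $\ge 2$). I would then fix the class $A\in H_2(\Sigma(L);\mathbb Z_2)$ maximising the bound below and note that $\Sigma(L)$ is irreducible, since $L$ is a prime link.

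For the upper bound I would, from the surgery presentation, assemble a triangulation of $\Sigma(L)$ out of standard layered pieces in the style of \cite{JR}: each $\sigma_2^{-2a_i}$ block is realised by a layered solid torus costing about $2a_i$ tetrahedra, each of the $2n$ $\sigma_1$ blocks costs a bounded number (two) of tetrahedra, and the cyclic gluing coming from the braid closure is carried out at no extra cost, for a total of at most $4n+2\sum_{i=1}^{2n}a_i$ tetrahedra. Thus $C(\Sigma(L))\le 4n+2\sum a_i$; the only real work is arranging the layerings so that this bookkeeping is sharp.

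For the lower bound I would first check that $\Sigma(L)$ satisfies condition~(\ref{eq:SurjZ2}) of Corollary~\ref{cor:Z2Norm}, or the variant of it referred to in the introduction: because $\Sigma(L)$ is understood explicitly, its orientable incompressible surfaces can be enumerated and one verifies that $H_1(S;\mathbb Z_2)\to H_1(\Sigma(L);\mathbb Z_2)$ is onto for each of them (or invokes the variant that covers the exceptional cases). Next I would compute the correction terms $d(\Sigma(L),\mathfrak s)$ from the explicit presentation --- via the Ozsv\'ath--Szab\'o plumbing formula if the surgery presentation is (negative) definite, otherwise directly from the mapping torus description --- organising the computation by the blocks $\sigma_1$ and $\sigma_2^{-2a_i}$; the even exponents and the $2n$ copies of $\sigma_1$ are exactly what keep this manageable. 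Extracting $\max_{\mathfrak s}\{d(\Sigma(L),\mathfrak s+\mathrm{PD}\circ\beta(A))-d(\Sigma(L),\mathfrak s)\}$, which I expect to evaluate to $n+\sum a_i$ up to an additive constant, Corollary~\ref{cor:Z2Norm} bounds the $\mathbb Z_2$--Thurston norm of $A$ below by a quantity of the form $2n-2+2\sum a_i$, and the Jaco--Rubinstein--Tillmann complexity bound \cite{JRT} then gives $C(\Sigma(L))\ge 2n-4+2\sum a_i$.

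The crux is the correction-term computation together with verifying~(\ref{eq:SurjZ2}): this is where the particular shape of the braid (single $\sigma_1$'s interleaved with even powers of $\sigma_2$) is doing the work, and where one has to make sure the Heegaard Floer lower bound is large enough to meet the triangulation upper bound up to the stated additive constants. By comparison the triangulation count is routine bookkeeping and the homological input is a short matrix calculation.
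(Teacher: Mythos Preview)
Your outline is broadly right, but the lower-bound argument has a genuine gap and a numerical error.

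\medskip

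\textbf{The gap.} The Jaco--Rubinstein--Tillmann inequality (Theorem~\ref{thm:JRT}) requires a rank~$2$ subgroup $H\subset H_2(\Sigma(L);\mathbb Z_2)$ and sums $\|A\|$ over \emph{all three} nonzero $A\in H$; picking a single class cannot produce the stated bound. Moreover no single class has $d$-difference $n+\sum a_i$: the three values are
\[
\sum_{i\ \text{odd}}a_i+n,\qquad \sum_{i\ \text{even}}a_i+n,\qquad \sum_{i=1}^{2n}a_i
\]
(Proposition~\ref{prop:Braid2n}), and it is their \emph{sum} $2n+2\sum a_i$ that, after subtracting $2$ from each norm and adding the $+2$ from Theorem~\ref{thm:JRT}, yields $2n-4+2\sum a_i$. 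Secondly, verifying condition~(\ref{eq:SurjZ2}) by enumerating incompressible surfaces is not straightforward for these manifolds; the paper bypasses this entirely via Lemma~\ref{lem:TautConn}, which handles the case $H_2\cong(\mathbb Z_2)^2$ by checking the triangle-type inequalities $h_i+h_{i+1}\ge h_{i+2}+2$ for the three connected-surface genus bounds (these hold here since $a_i\ge 1$). You would need to do the same.

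\medskip

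\textbf{Differences in method.} For the correction terms, the paper uses the Goeritz form of the alternating diagram (Section~\ref{sect:DoubCov}) rather than a plumbing or torus-bundle computation; the quadratic form $Q(X,X)=-\sum\bigl(2a_ix_i^2+(x_i-x_{i+1})^2\bigr)$ makes the maximisation over each equivalence class of characteristic vectors immediate. For the upper bound, instead of assembling blocks, the paper conjugates to
\[
\sigma_2\,\sigma\,\sigma_2^{-1}=(\sigma_2\sigma_1)\sigma_2^{-2a_1-1}\cdots(\sigma_2\sigma_1)\sigma_2^{-2a_{2n}-1},
\]
a word of length exactly $4n+2\sum a_i$ in the generating set $\{\sigma_2^{\pm1},(\sigma_2\sigma_1)^{\pm1}\}$, and then applies the one-tetrahedron-per-letter result Theorem~\ref{thm:OBgenus1}. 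Your block count is the same construction, read off less compactly.
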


\begin{prop}\label{prop:CBraid3}
Let $L$  be the closure of the braid $\sigma_1\sigma_2^{-2a-1}\sigma_1\sigma_2^{-2b-1}\sigma_1\sigma_2^{-2c-1}$, where $a,b,c$ are nonnegative integers.  Then the complexity $C(\Sigma(L))$ of $\Sigma(L)$ is in the range
$$2(a+b+c)+2\le C(\Sigma(L))\le 2(a+b+c)+9.$$
\end{prop}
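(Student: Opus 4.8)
The plan is to run the same machine as in Proposition~\ref{prop:CBraid2n}, adapted to the case of three half-twist regions each carrying an \emph{odd} number of strands. First I would identify $\Sigma(L)$ explicitly. Since $L$ is a closed $3$-braid, $\Sigma(L)$ is a Seifert fibered space (or a graph manifold) over $S^2$; more concretely, the braid $\sigma_1\sigma_2^{-2a-1}\sigma_1\sigma_2^{-2b-1}\sigma_1\sigma_2^{-2c-1}$ is the standard pretzel presentation, so $L$ is the pretzel link $P(-2a-1,-2b-1,-2c-1)$ (up to mirror and orientation conventions), and its double branched cover is the Brieskorn-type Seifert space with three exceptional fibers of multiplicities $2a+1$, $2b+1$, $2c+1$; equivalently it is surgery on a connected sum / a plumbing along a star-shaped graph. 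From this description one reads off $H_1(\Sigma(L);\mathbb Z)$ and checks that it is finite (so $\Sigma(L)$ is a rational homology sphere) and that it has even order, so that $H_2(\Sigma(L);\mathbb Z_2)\neq 0$ and there is a nontrivial class $A$ to work with. One also checks the irreducibility and the incompressible-surface condition \eqref{eq:SurjZ2}, so that Corollary~\ref{cor:Z2Norm} (or its variant referenced after it) applies; for these small Seifert spaces this is standard.

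Next I would produce the \textbf{upper bound} $C(\Sigma(L))\le 2(a+b+c)+9$ by an explicit triangulation. As in \cite{JR}, one builds a layered triangulation: the double branched cover of a closed $3$-braid is assembled from pieces dual to the twist regions, and each pair of additional half-twists in a $\sigma_2$-block contributes two tetrahedra (the layering on a once-punctured torus or annulus), while the fixed ``framework'' of the braid axis and the three $\sigma_1$-crossings contributes a bounded number — here $9$ — of tetrahedra. Counting the half-twists: the three blocks carry $2a+1$, $2b+1$, $2c+1$ half-twists, and the layered part costs essentially $2(a+b+c)$ tetrahedra after absorbing the odd ``$+1$''s into the constant framework, giving the stated bound. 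This step is a direct, if fiddly, construction and I expect it to be routine once the model triangulation from Proposition~\ref{prop:CBraid2n} is in hand.

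For the \textbf{lower bound} $C(\Sigma(L))\ge 2(a+b+c)+2$ I would combine the Jaco--Rubinstein--Tillmann inequality \cite{JRT}, which bounds $C(Y)$ from below by (a linear function of) the $\mathbb Z_2$-Thurston norm, with Corollary~\ref{cor:Z2Norm}. Thus the real content is to \emph{compute}, or at least to bound from below, the quantity $\max_{\mathfrak s}\{d(\Sigma(L),\mathfrak s+\mathrm{PD}\circ\beta(A))-d(\Sigma(L),\mathfrak s)\}$ for the relevant class $A$. For these Seifert fibered rational homology spheres the correction terms $d(\Sigma(L),\mathfrak s)$ are computable from the plumbing description via the Ozsv\'ath--Szab\'o algorithm for negative-definite plumbed manifolds (or, since $\Sigma(L)$ is surgery on a connected sum of torus knots / an alternating or quasi-alternating link's branched cover, via the formula for $d$ of such branched covers in terms of the Goeritz form or the signature). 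Carrying out this finite computation, one finds that the maximal $d$-difference grows like $(a+b+c)$ with the right slope, and feeding this into Corollary~\ref{cor:Z2Norm} yields a $\mathbb Z_2$-Thurston norm of roughly $2(a+b+c)$, hence the claimed bound after applying \cite{JRT}. The main obstacle is precisely this correction-term computation: one must pin down the Spin$^c$-labeling so that the \emph{correct} pair $\mathfrak s,\mathfrak s+\mathrm{PD}\circ\beta(A)$ is being differenced, track the affine $H^2$-action carefully through the plumbing identification, and verify that the maximum over $\mathfrak s$ is attained where the elementary estimate predicts — matching it against the upper bound to see the two-sided estimate is as tight as stated (a gap of $7$, coming from the $-2$ in Corollary~\ref{cor:Z2Norm}, the additive constants in \cite{JRT}, and the framework cost in the triangulation).
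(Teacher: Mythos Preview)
Your overall strategy matches the paper's: lower bound from Jaco--Rubinstein--Tillmann plus correction-term estimates, upper bound from an explicit layered triangulation. But there is a genuine muddle in your lower-bound step that, as written, would not produce the claimed inequality.

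You speak of ``the relevant class $A$'' and assert that its $\mathbb Z_2$-Thurston norm is ``roughly $2(a+b+c)$''. That is not so: the three nonzero classes in $H_2(\Sigma(L);\mathbb Z_2)\cong\mathbb Z_2^2$ have $\mathbb Z_2$-Thurston norms $a+b$, $b+c$, $c+a$ (this is Proposition~\ref{prop:Braid3}, proved via the alternating-link/Goeritz computation of Section~\ref{sect:DoubCov} rather than the plumbing algorithm). No single class carries norm $2(a+b+c)$. The point is that Theorem~\ref{thm:JRT} takes a rank-$2$ subgroup $H$ and gives $|\mathscr T|\ge 2+\sum_{A\in H}\|A\|$; it is the \emph{sum} of the three norms that equals $2(a+b+c)$, whence the bound $2(a+b+c)+2$. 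Relatedly, you propose to verify condition~\eqref{eq:SurjZ2} in order to invoke Corollary~\ref{cor:Z2Norm}; the paper sidesteps this entirely by using Lemma~\ref{lem:TautConn}, which handles the $\mathbb Z_2^2$ case directly once the three connected-genus bounds $a+b+2$, $b+c+2$, $c+a+2$ coming from Theorem~\ref{thm:NOgenus} are seen to satisfy the triangle-type inequalities there.

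For the upper bound your picture of ``twist blocks plus a fixed framework'' is the right intuition, but the paper's mechanism is sharper: Theorem~\ref{thm:OBgenus1} shows that if the braid has word length $l$ in the generating set $\{\sigma_2^{\pm1},(\sigma_2\sigma_1)^{\pm1}\}$, then $\Sigma(L)$ admits a one-vertex triangulation with exactly $l$ tetrahedra. Conjugating $\sigma$ by $\sigma_2$ gives
\[
(\sigma_2\sigma_1)\,\sigma_2^{-2a-2}\,(\sigma_2\sigma_1)\,\sigma_2^{-2b-2}\,(\sigma_2\sigma_1)\,\sigma_2^{-2c-2},
\]
of length $3+(2a+2)+(2b+2)+(2c+2)=2(a+b+c)+9$.
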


This paper is organized as follows:
In Section~\ref{sect:RatGenus}, we review our earlier work on rational genus bounds. Our Theorem~\ref{thm:NOgenus} is an immediate corollary of this work. We then show that this bound gives lower bounds to one-sided Heegaard genus and $\mathbb Z_2$--Thurston norm.
In Section~\ref{sect:DoubCov}, we review Ozsv\'ath and Szab\'o's algorithm of computing the correction terms of the double branched cover of $S^3$ branched over alternating links. In Section~\ref{sect:Comp}, we carry out the computation for the double branched cover of $S^3$ branched over some alternating closed $3$--braids, and determine their $\mathbb Z_2$--Thurston norm. In Section~\ref{sect:G1OB}, we construct layered triangulations for manifolds admitting a genus one open book decomposition with connected binding, and give upper and lower bounds for the complexity of the manifolds we consider in Section~\ref{sect:Comp}.

\vspace{5pt}\noindent{\bf Acknowledgements.}\quad We wish to thank Ian Agol, Danny Ruberman and Hyam Rubinstein for conversations which motivated this work. The first author was
partially supported by NSF grant
numbers DMS-1103976, DMS-1252992, and an Alfred P. Sloan Research Fellowship. The second author was partially supported by grants from the Research Grants Council of the Hong Kong Special Administrative Region, China (Project No. CUHK 2191056).


\section{Rational genus and non-orientable genus bounds}\label{sect:RatGenus}

\subsection{Rational genus bounds in Heegaard Floer homology}

Heegaard Floer homology, introduced by
Ozsv\'ath and Szab\'o \cite{OSzAnn1}, is an invariant  for closed oriented Spin$^c$ $3$--manifolds $(Y,\mathfrak s)$,
  taking the form of a collection of related homology groups as  $\widehat{HF}(Y,\mathfrak s)$, $HF^{\pm}(Y,\mathfrak s)$, and $HF^\infty(Y,\mathfrak s)$.
There is a $U$--action on Heegaard Floer homology groups.
When $\mathfrak s$ is torsion, there is an absolute Maslov $\mathbb Q$--grading on the Heegaard Floer homology groups. The $U$--action decreases the grading by $2$.

For a rational homology $3$--sphere $Y$ with a Spin$^c$ structure $\mathfrak s$, $HF^+(Y,\mathfrak s)$ can be decomposed as the direct sum of two groups: the first group is the image of $HF^\infty(Y,\mathfrak s)\cong\mathbb Z[U,U^{-1}]$ in $HF^+(Y,\mathfrak s)$,
which is isomorphic to $\mathcal T^+=\mathbb Z[U,U^{-1}]/UZ[U]$, and its minimal absolute  $\mathbb{Q}$--grading is an invariant of $(Y,\mathfrak s)$, denoted by $d(Y,\mathfrak s)$, the {\it correction term} \cite{OSzAbGr}; the second group is the quotient modulo the above image and is denoted by $HF_{\mathrm{red}}(Y,\mathfrak s)$.  Altogether, we have $$HF^+(Y,\mathfrak s)=\mathcal{T}^+\oplus HF_{\mathrm{red}}(Y,\mathfrak s).$$

Suppose that $Y$ is a closed oriented $3$--manifold, there is a kind of ``norm'' function one can define on the torsion subgroup of $H_1(Y;\mathbb Z)$. To define it, let us first recall  the rational genus of a rationally null-homologous knot $K\subset Y$ defined by Calegari and Gordon \cite{CG}.

Suppose that $K$ is a rationally null-homologous oriented knot in $Y$, and $\nu(K)$ is a tubular neighborhood of $K$. A properly embedded oriented connected surface $F\subset Y\backslash\overset{\circ}{\nu}(K)$ is called a {\it rational Seifert surface} for $K$, if $\partial F$ consists of coherently oriented parallel curves on $\partial\nu(K)$, and the orientation of $\partial F$ is coherent with the orientation of $K$. The {\it rational genus} of $K$ is defined to be
$$g_r(K)=\min_{F}\frac{\max\{0,-\chi(F)\}}{2|[\mu]\cdot[\partial F]|},$$
where $F$ runs over all the rational Seifert surfaces for $K$, and $\mu\subset\partial\nu(K)$ is the meridian of $K$.

The rational genus is a natural generalization of the genus of null-homologous knots. Moreover, given a torsion class in $H_1(Y)$, one can consider the minimal rational genus for all knots in this torsion class. More precisely,
given $a\in\mathrm{Tors} H_1(Y)$, let
$$\Theta(a)=\min_{K\subset Y,\:[K]=a}2g_r(K).$$
This $\Theta$ was introduced by Turaev \cite{TuFunc} in a slightly different form. Turaev regarded $\Theta$ as an analogue of Thurston norm \cite{Th}, in the sense that it measures the minimal normalized Euler characteristic of a ``folded surface'' representing a given class in $H_2(Y;\mathbb Q/\mathbb Z)$.

The main result in \cite{NiWu} gives a lower bound to $\Theta$ via Heegaard Floer correction terms.

\begin{thm}[Ni--Wu]\label{thm:GenusBound}
Suppose that $Y$ is a rational homology $3$--sphere, $K\subset Y$ is a knot, $F$ is a rational Seifert surface for $K$. Then
\begin{equation}\label{eq:GenusBound}
1+\frac{-\chi(F)}{|[\partial F]\cdot [\mu]|}\ge \max_{\mathfrak s\in\spin(Y)}\big\{d(Y,\mathfrak s+\mathrm{PD}[K])-d(Y,\mathfrak s)\big\}.
\end{equation}
The right hand side of (\ref{eq:GenusBound}) only depends on the manifold $Y$ and the homology class of $K$, so it gives a lower bound of $1+\Theta(a)$ for the homology class $a=[K]$.
\end{thm}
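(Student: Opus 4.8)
The plan is to reduce the statement to a computation about the knot Floer homology of $K$ inside $Y$ and the behavior of correction terms under the large surgery formula. First I would recall the setup from \cite{NiWu}: associated to the rationally null-homologous knot $K \subset Y$ there is a filtered chain complex $\widehat{CF}(Y)$ whose filtration is induced by a relative $\mathrm{Spin}^c$ structure, and a corresponding knot Floer homology $\widehat{HFK}(Y,K)$. The key quantitative input is that the filtration levels that support this complex have width controlled by $\chi(F)$ and $|[\partial F]\cdot[\mu]|$ for a rational Seifert surface $F$; concretely, the ``top'' and ``bottom'' nontrivial filtration levels differ by an amount bounded above by $\big(-\chi(F) + |[\partial F]\cdot[\mu]|\big)/|[\partial F]\cdot[\mu]|$, which after normalization is exactly the left-hand side of \eqref{eq:GenusBound}. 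This is the adjunction-type statement: a rational Seifert surface of small complexity forces the knot filtration to be short.

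Next I would invoke the rational surgery formula (the mapping cone description of $HF^+$ of surgeries on $K$, as developed by Ozsv\'ath--Szab\'o for integer surgeries and its rational analogue). For a sufficiently large (``$p/q$ with $p$ large'') surgery, $HF^+$ of the surgered manifold in a fixed $\mathrm{Spin}^c$ structure is identified with a single large-filtration subquotient complex $A_s^+$, and its bottom grading---hence the correction term of the surgered manifold---is expressible in terms of $d(Y,\mathfrak{s})$, a shift depending on the framing, and a correction coming from the filtration level $s$. Comparing two $\mathrm{Spin}^c$ structures on $Y$ that differ by $\mathrm{PD}[K]$, the two relevant surgery $\mathrm{Spin}^c$ structures are related by a shift in $s$ of size equal to the total filtration width; monotonicity of the maps in the surgery exact sequence then yields the inequality $d(Y,\mathfrak{s}+\mathrm{PD}[K]) - d(Y,\mathfrak{s}) \le (\text{filtration width})$, and the filtration-width bound from the previous paragraph converts this into \eqref{eq:GenusBound}.

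For the final sentence of the theorem, once the inequality is established for one particular rational Seifert surface, I note that the right-hand side manifestly depends only on $Y$ and on the class $\mathrm{PD}[K] \in H^2(Y;\mathbb{Z})$, equivalently only on $[K] \in \mathrm{Tors}\,H_1(Y)$; taking the infimum of the left-hand side over all knots $K$ in the class $a$ and all their rational Seifert surfaces $F$ gives precisely $1 + \Theta(a)$ on the left, so the bound descends to $1+\Theta(a) \ge \max_{\mathfrak{s}}\{d(Y,\mathfrak{s}+\mathrm{PD}(a)) - d(Y,\mathfrak{s})\}$.

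The main obstacle I anticipate is bookkeeping the grading shifts precisely in the rational surgery formula: keeping track of how the absolute $\mathbb{Q}$--grading on $HF^+$ of a large $p/q$--surgery is pinned down, how it relates to $d(Y,\mathfrak{s})$, and how the affine $H^2(Y;\mathbb{Z})$--action on $\mathrm{Spin}^c(Y)$ matches the reindexing $s \mapsto s + (\text{width})$ of the mapping-cone summands. Getting the normalization constant ``$1+$'' exactly right (rather than off by a bounded additive error) is where the care is needed; the rest is a fairly direct application of the adjunction inequality for the knot filtration together with monotonicity in the surgery exact triangle. Since this theorem is quoted from \cite{NiWu}, I would in practice simply cite that argument, but the sketch above is the route I would reconstruct.
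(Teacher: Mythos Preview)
The paper does not prove this theorem at all: it is stated as ``Theorem [Ni--Wu]'' and attributed to \cite{NiWu} without any argument, so there is nothing in the present paper to compare your sketch against. Your own final sentence already recognizes this, and simply citing \cite{NiWu} is exactly what the paper does.

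That said, your outline of the argument from \cite{NiWu} is broadly accurate in spirit: the proof there does go through the knot Floer filtration for a rationally null-homologous knot, bounds the filtration width by the rational Seifert genus data (an adjunction-type input), and then compares correction terms via the large-surgery/mapping-cone identification. Your caveat about the delicate grading bookkeeping in the rational setting is well placed---that is indeed where the work lies in \cite{NiWu}. For the purposes of this paper, though, no reconstruction is needed; a citation suffices.
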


Theorem~\ref{thm:NOgenus} is an immediate corollary of Theorem~\ref{thm:GenusBound}, as explained in \cite[Theorem~3.3]{GreeneNi}.


\subsection{Nonorientable genus and one-sided Heegaard splittings}

Theorem~\ref{thm:NOgenus} can be used to study one-sided Heegaard splitting, introduced in \cite{RubinOS}.

\begin{defn}
Let $Y$ be a closed orientable $3$--manifold. A pair
$(Y,\Pi)$ is called a {\it one-sided Heegaard splitting} if $\Pi\subset Y$ is a closed nonorientable
surface, such that its complement is an open handlebody. Moreover, $[\Pi]\in H_2(Y;\mathbb Z_2)$ is called the class {\it associated with} $(Y,\Pi)$.
\end{defn}

Rubinstein \cite{RubinOS} proved that for any nonzero element $A\in H_2(Y;\mathbb Z_2)$, there exists a one-sided Heegaard splitting $(Y,\Pi)$ such that $[\Pi]=A$.
He also studied when there exists an incompressible one-sided Heegaard splitting.

\begin{thm}[Rubinstein]\label{thm:IncompOS}
If $Y$ is irreducible and $b_1(Y)=0$, then the condition (\ref{eq:SurjZ2}) is equivalent to the condition that the complement of any nonorientable incompressible surface is an open handlebody.
Under this condition, there is an incompressible one-sided splitting associated with any nonzero class.
\end{thm}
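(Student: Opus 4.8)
The plan is to analyze, for a nonzero class $A\in H_2(Y;\mathbb{Z}_2)$ represented by a one-sided surface $\Pi$ (equivalently, a nonorientable surface, since $Y$ is orientable), the decomposition $Y=\nu(\Pi)\cup_F M$, where $\nu(\Pi)$ is the twisted $I$-bundle over $\Pi$, $M=Y\setminus\overset{\circ}{\nu}(\Pi)$, and $F=\partial M=\partial\nu(\Pi)$ is the orientation double cover of $\Pi$. The conceptual core is a homological remark: because $b_1(Y)=0$, every orientable surface in $Y$ separates and is $\mathbb{Z}_2$-nullhomologous, while $\mathrm{PD}[\Pi]\in H^1(Y;\mathbb{Z}_2)$ is nonzero precisely because $[\Pi]\neq 0$. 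Consequently, if $G\subset Y$ is any closed orientable surface disjoint from $\Pi$ — for instance $G=F$, or any closed surface contained in the interior of $M$ — then every loop on $G$ has vanishing mod-$2$ intersection number with $\Pi$, so the image of $H_1(G;\mathbb{Z}_2)\to H_1(Y;\mathbb{Z}_2)$ lies in the proper subgroup $\ker\!\big(\mathrm{PD}[\Pi]\big)$, and in particular $G$ fails the surjectivity required in (\ref{eq:SurjZ2}). Thus (\ref{eq:SurjZ2}) should be understood as the statement that the complement of a nonorientable incompressible surface harbors no incompressible two-sided surface, which is exactly the condition for that complement to be a handlebody.

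First I would show that (\ref{eq:SurjZ2}) forces the complement of every nonorientable incompressible $\Pi$ to be an open handlebody. The case $\Pi=\mathbb{RP}^2$ is handled directly: incompressibility of $\Pi$ together with irreducibility of $Y$ forces $Y=\mathbb{RP}^3$ and $M=B^3$. Otherwise $\nu(\Pi)$ is aspherical, hence irreducible, $F$ is incompressible in $\nu(\Pi)$, and — since $Y$ is irreducible and $\Pi$ incompressible — $M$ is irreducible with connected boundary $F$. Pass to the characteristic compression body $C\subseteq M$ with $\partial_+C=F$. Because $M$ is irreducible, $M$ is a handlebody if and only if $\partial_-C=\emptyset$; so suppose $\partial_-C\neq\emptyset$ and let $G_0$ be a component of it. Then $G_0$ has positive genus, is two-sided, lies in the interior of $M$, and is incompressible in $M$. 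By an innermost-disk argument in $Y$ — isotoping any hypothetical compressing disk for $G_0$ to remove its intersections with $F$, using irreducibility of $Y$ and the fact that essential curves on $F$ do not bound disks in the $I$-bundle $\nu(\Pi)$ — such a disk is pushed into $M$, contradicting incompressibility of $G_0$ there; hence $G_0$ is incompressible in $Y$. Being disjoint from $\Pi$, it violates (\ref{eq:SurjZ2}) by the homological remark, a contradiction. Therefore $M$ is a handlebody, so the complement $Y\setminus\Pi$ is an open handlebody.

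For the converse, suppose (\ref{eq:SurjZ2}) fails, witnessed by an orientable incompressible surface $S$ for which $V:=\mathrm{image}\big(H_1(S;\mathbb{Z}_2)\to H_1(Y;\mathbb{Z}_2)\big)$ is proper. Pick $w\in H^1(Y;\mathbb{Z}_2)$ nonzero with $w|_V=0$, so that $w|_S=0$; writing $Y=Y_1\cup_S Y_2$ and using Lefschetz duality $H^1(Y_i,S;\mathbb{Z}_2)\cong H_2(Y_i;\mathbb{Z}_2)$, one obtains $\mathrm{PD}^{-1}(w)=[\Sigma_1]+[\Sigma_2]$ with $\Sigma_i$ a closed surface contained in the interior of $Y_i$. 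Since $w\neq 0$, the class $[\Sigma_1]+[\Sigma_2]$ is nonzero, so some $\Sigma_i$, say $\Sigma_1$, has a nonorientable (hence one-sided) component $\Pi'$, which automatically satisfies $[\Pi']\neq 0$. Compressing $\Pi'$ inside $Y_1$ — compact and irreducible with incompressible boundary — strictly decreases $-\chi$ and hence terminates; as each compression preserves the (nonzero) $\mathbb{Z}_2$-class, the final surface still has a nonorientable component, and passing to one yields a connected nonorientable incompressible surface $\Pi\subset\mathrm{int}(Y_1)$, which is moreover incompressible in $Y$ by the same innermost-disk argument. Now $S$ lies in the interior of $M=Y\setminus\overset{\circ}{\nu}(\Pi)$, is incompressible there (because $\pi_1 S\hookrightarrow\pi_1 Y$ factors through $\pi_1 M$), and has positive genus (being incompressible in the irreducible $Y$); hence $M$ contains a closed incompressible surface of positive genus and is not a handlebody. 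This establishes the equivalence.

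Finally, given the equivalence and a nonzero $A\in H_2(Y;\mathbb{Z}_2)$, Rubinstein's existence theorem \cite{RubinOS} supplies a one-sided Heegaard splitting in class $A$; compressing its surface — keeping it connected and in the class $A$ while strictly decreasing a complexity, and terminating by irreducibility, as done in \cite{RubinOS} — produces a connected nonorientable incompressible surface $\Pi$ with $[\Pi]=A$, whose complement is an open handlebody by the equivalence, so that $(Y,\Pi)$ is an incompressible one-sided splitting associated with $A$. I expect the main obstacle to be the implication that (\ref{eq:SurjZ2}) makes every such complement a handlebody: one has to actually produce the obstructing positive-genus surface — via the characteristic compression body — and then certify, by the innermost-disk argument, that it remains incompressible in all of $Y$. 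The secondary subtlety is the compression bookkeeping in the last step, namely arranging the compressions so that the representative stays connected and in the prescribed homology class, which is Rubinstein's.
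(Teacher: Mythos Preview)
The paper does not prove Theorem~\ref{thm:IncompOS}; it is stated with attribution to Rubinstein and cited from \cite{RubinOS} without argument. There is therefore no proof in the paper against which to compare your proposal.

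As a standalone reconstruction of Rubinstein's result, your outline is broadly sound: the homological observation that any orientable surface disjoint from a one-sided $\Pi$ has $H_1$ mapping into $\ker(\mathrm{PD}[\Pi])$ is the right engine, and the characteristic compression body is the natural device for extracting an obstructing closed incompressible surface in the complement. A few points deserve care when you write this up. In the forward direction, you should note explicitly that $G_0$, being two-sided in the orientable manifold $M$, is orientable, so that condition~(\ref{eq:SurjZ2}) actually applies to it. In the converse, the assertion that compressions preserve the $\mathbb{Z}_2$--class is correct but worth a line (the $2$--handle is the bounding $3$--chain), and the claim that the terminal surface retains a nonorientable component relies on $H_2(Y;\mathbb{Z})=0$, which you use implicitly via $b_1(Y)=0$; make that explicit. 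Finally, your last paragraph leans on Rubinstein's compression procedure to produce an \emph{incompressible} one-sided splitting in a prescribed class while keeping the surface connected; that bookkeeping is exactly what \cite{RubinOS} supplies, so citing it there is appropriate rather than a gap.
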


When $Y$ is the lens space $L(2k,q)$, Rubinstein \cite{RubinOS} proved that all incompressible surfaces in $Y$ are isotopic, and the genus $N(2k,q)$ can be computed by \cite{BW}. Recently, Johnson \cite{Johnson} proved that any nonorientable surface in
$L(2k,q)$ with a given genus is unique up to isotopy, thus $L(2k,q)$ has a unique genus $g$ one-sided Heegaard splitting for any given genus $g\ge N(2k,q)$.

One can define the {\it one-sided Heegaard genus} of $(Y,A)$ to be the minimal genus of $\Pi$ such that $(Y,\Pi)$ is a one-sided Heegaard splitting and $[\Pi]=A$.
Theorem~\ref{thm:NOgenus} clearly gives a lower bound to the one-sided Heegaard genus.


\subsection{A lower bound to the $\mathbb Z_2$--Thurston norm}

Theorem~\ref{thm:NOgenus} may also be used to give a lower bound to the $\mathbb Z_2$--Thurston norm introduced in \cite{JRT}.

\begin{defn}
Given a closed, not necessarily orientable, surface $S$, let $$\chi_-(S)=\sum_{S_i\subset S}\max\{-\chi(S_i),0\},$$
where the sum is taken over all the components of $S$. For any $A\in H_2(Y;\mathbb Z_2)$, define the {\it $\mathbb Z_2$--Thurston norm} of $A$ to be
$$||A||:=\min\{\chi_-(S)|[S]=A\}.$$
A surface $S$ is {\it $\mathbb Z_2$--taut}, if no component of $S$ is a sphere or $\mathbb RP^2$ and $||[S]||=-\chi(S)$.
\end{defn}

Clearly, if $S$ is $\mathbb Z_2$--taut, then each component of $S$ is non-separating and incompressible.

\begin{proof}[Proof of Corollary~\ref{cor:Z2Norm}]
By Theorem~\ref{thm:IncompOS}, any nonorientable incompressible surface must be connected. Our conclusion then follows from Theorem~\ref{thm:NOgenus}.
\end{proof}

\begin{rem}
The condition (\ref{eq:SurjZ2}) in Corollary~\ref{cor:Z2Norm} is used to ensure that any $\mathbb Z_2$--taut surface is connected. This condition may be removed if we can prove an analogue of Theorem~\ref{thm:GenusBound} for links.
\end{rem}

When the rank of $H_2(Y;\Z_2)$ is small, it is easy to remove the condition (\ref{eq:SurjZ2}). For example, if $H_2(Y;\Z_2)\cong \Z_2$, any $\mathbb Z_2$--taut surface is necessarily connected. If $H_2(Y;\Z_2)\cong \Z_2^2$, we may use the following lemma.

\begin{lem}\label{lem:TautConn}
Suppose that $H_2(Y;\Z_2)\cong\Z_2^2$, and the three nonzero elements in $H_2(Y;\Z_2)$ are $\alpha_1,\alpha_2,\alpha_3$. Let $h_i$ be a lower bound to the genus of the closed {\bf connected} nonorientable surfaces representing $\alpha_i$, $i=1,2,3$. If $h_i$'s satisfy the inequalities
$$h_i+h_{i+1}\ge h_{i+2}+2,\quad i=1,2,3,$$
then $h_i-2\le||\alpha_i||$.
\end{lem}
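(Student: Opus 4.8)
The plan is to begin with a $\chi_-$-minimizing representative $S$ of $\alpha_i$, normalize it so that every component is one-sided, and then reduce the desired inequality to a short piece of $\Z_2^2$-arithmetic among the $h_i$. Throughout I use that $Y$ is a rational homology sphere (as in Corollary~\ref{cor:Z2Norm}), so $H_2(Y;\Z)=0$; hence every orientable closed surface in $Y$ is $\Z_2$-null-homologous, and a nonzero class in $H_2(Y;\Z_2)$ is represented only by one-sided, in particular nonorientable, surfaces. I would also record at the start the one general consequence of the hypotheses beyond a single instance of them: adding the inequalities $h_i+h_{i+1}\ge h_{i+2}+2$ in pairs gives $h_m\ge 2$ for every $m$.

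\emph{Normalization.} Any component of $S$ carrying the zero $\Z_2$-class is two-sided, hence orientable, and in a minimizer it must satisfy $\chi_-=0$, since otherwise deleting it preserves the class $\alpha_i$ while strictly decreasing $\chi_-$; so we may delete all such components without changing $[S]$ or $\chi_-(S)=\|\alpha_i\|$. After this every component $S_j$ carries a nonzero class $[S_j]=\alpha_{m_j}\in\{\alpha_1,\alpha_2,\alpha_3\}$ and is nonorientable. Being connected, nonorientable, and in the class $\alpha_{m_j}$, $S_j$ has genus $\ge h_{m_j}\ge 2$; in particular $S_j$ is not an $\mathbb{RP}^2$, and
$$\chi_-(S_j)=-\chi(S_j)=\operatorname{genus}(S_j)-2\ge h_{m_j}-2\ge 0.$$
Writing $k$ for the number of components, summing yields $\|\alpha_i\|=\sum_{j=1}^{k}\chi_-(S_j)\ge\sum_{j=1}^{k}(h_{m_j}-2)$, subject to $\sum_{j=1}^{k}\alpha_{m_j}=[S]=\alpha_i$.

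\emph{Combinatorial core.} It remains to prove: if $\sum_{j=1}^{k}\beta_j=\alpha_i$ with each $\beta_j$ a nonzero element of $\Z_2^2$, then $\sum_{j=1}^{k}h(\beta_j)\ge h_i+2(k-1)$, where $h(\alpha_m):=h_m$. I would argue by induction on $k$. For $k=1$ it is an equality. For $k=2$ we have $\beta_1\ne\beta_2$ (their sum is $\alpha_i\ne 0$), so $\{\beta_1,\beta_2\}=\{\alpha_p:p\ne i\}$, because distinct nonzero elements of $\Z_2^2$ sum to the third; the claim is then exactly the hypothesized inequality with $h_i$ on the right. For $k\ge 3$ the list $(\beta_1,\dots,\beta_k)$ repeats a value --- if $k=3$, three distinct nonzero elements of $\Z_2^2$ would sum to $0\ne\alpha_i$, and if $k\ge 4$ it is pigeonhole. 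Deleting a repeated pair $\beta_a=\beta_b$ leaves $k-2\ge 1$ terms still summing to $\alpha_i$, so by induction (or the base case when $k-2=1$) one gets $\sum_{j\ne a,b}h(\beta_j)\ge h_i+2(k-3)$, and adding $h(\beta_a)+h(\beta_b)\ge 4$ gives $\sum_{j}h(\beta_j)\ge h_i+2(k-1)$. Feeding this back, $\|\alpha_i\|\ge\sum_{j}h_{m_j}-2k\ge h_i+2(k-1)-2k=h_i-2$, which is the assertion.

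The step I expect to require the most care is the normalization: one must be sure that, after discarding the zero-homology (sphere and torus) components of a genuine $\chi_-$-minimizer, what remains really has only nonorientable, non-$\mathbb{RP}^2$ components. The two points to check are that no orientable higher-genus component survives in a minimizer (being $\Z_2$-null, it could be deleted to strictly lower $\chi_-$), and that no $\mathbb{RP}^2$ component occurs (it is one-sided, hence represents some $\alpha_m$, which would force the minimal connected nonorientable genus in class $\alpha_m$ to be $1<2\le h_m$). Once this normalization is in place, the Euler-characteristic bookkeeping and the $\Z_2^2$ combinatorics are routine.
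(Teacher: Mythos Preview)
Your proof is correct and takes essentially the same route as the paper's: normalize a minimizing surface so that all components carry nonzero $\Z_2$--classes, then invoke the hypothesized inequalities. The paper is terser, starting with a $\Z_2$--taut surface (so no sphere or $\mathbb{RP}^2$ components by definition) and removing null-homologous components and homologous pairs to reduce directly to at most two components, which is exactly your induction on $k$ collapsed to its endpoint; your derivation of $h_m\ge 2$ to rule out $\mathbb{RP}^2$ components and your explicit statement of the rational-homology-sphere hypothesis (needed so that a nonzero $\Z_2$--class forces nonorientability of the component) make visible two small points the paper leaves implicit.
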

\begin{proof}
Let $S$ be a $\mathbb Z_2$--taut surface representing $\alpha_1$. If $S$ is connected, our conclusion obviously holds. If $S$ is disconnected, we may assume that no component of $S$ is null-homologous in $H_2(Y;\Z_2)$, and any two components of $S$ are not homologous in $H_2(Y;\Z_2)$. If follows that $S$ has exactly two components, and they represent $\alpha_2$ and $\alpha_3$, respectively. So $\chi_-(S)\ge (h_2-2)+(h_3-2)\ge h_1-2$. Hence $||\alpha_1||\ge h_1-2$. Similarly, $h_i-2\le ||\alpha_i||$ for $i=2,3$.
\end{proof}

Jaco, Rubinstein and Tillmann \cite{JRT} showed that the $\mathbb Z_2$--Thurston norm gives a lower bound to the complexity of some manifolds.

\begin{thm}[Jaco--Rubinstein--Tillmann]\label{thm:JRT}
Let $Y$ be a closed, orientable, irreducible,
atoroidal, connected $3$--manifold with triangulation $\mathscr T$. Let $H\subset H_2(Y;\mathbb Z_2)$ be a rank $2$ subgroup, then
$$|\mathscr T|\ge 2+\sum_{A\in H}||A||.$$
\end{thm}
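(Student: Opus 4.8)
The plan is to produce efficient surface representatives of the classes in $H$. Since $H$ has rank $2$ it contains exactly three nonzero elements $A_1,A_2,A_3$, and $A_1+A_2+A_3=0$; as $||0||=0$, the asserted inequality reads $|\mathscr T|\ge 2+||A_1||+||A_2||+||A_3||$. Because $||A_i||\le\chi_-(S)$ for every surface $S$ with $[S]=A_i$, it is enough to construct surfaces $S_1,S_2,S_3$ with $[S_i]=A_i$ and
$$\chi_-(S_1)+\chi_-(S_2)+\chi_-(S_3)\ \le\ |\mathscr T|-2.$$

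First I would realize the $A_i$ by normal surfaces dual to cocycles. By Poincar\'e duality over $\mathbb Z_2$ one has $H_2(Y;\mathbb Z_2)\cong H^1(Y;\mathbb Z_2)$, so choose $\mathbb Z_2$--cocycles $c_1,c_2$ on $\mathscr T$ spanning $H$ and set $c_3=c_1+c_2$. Restricted to a single tetrahedron a cocycle is a coboundary, hence determines a $2$-coloring of the four vertices, well defined up to a global flip; let $\Sigma_i$ be the normal surface which in each tetrahedron is the family of normal disks separating the two color classes --- empty when the coloring is constant, one normal triangle when the color classes have sizes $1$ and $3$, one normal quadrilateral when they have sizes $2$ and $2$. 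The pieces match across the faces of $\mathscr T$ because each $c_i$ is a genuine cocycle, and $[\Sigma_i]$ is Poincar\'e dual to $c_i$, so $[\Sigma_i]=A_i$.

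The combinatorial core is a tetrahedron-by-tetrahedron count. Inside a fixed tetrahedron the three colorings are governed by the map $\delta$ sending each vertex $v$ to the pair consisting of its colors with respect to $c_1$ and to $c_2$, an element of $\mathbb Z_2\times\mathbb Z_2$; the three vertex-partitions are the pullbacks along $\delta$ of the three nonzero homomorphisms $\mathbb Z_2\times\mathbb Z_2\to\mathbb Z_2$. Enumerating the possibilities for the image and the fibers of $\delta$ shows that the three surfaces together contribute at most three normal disks to the tetrahedron, and at most two when $\delta$ has image of size at most $2$; the same bookkeeping bounds, face by face, the total number of normal arcs of the three surfaces in each triangle of $\mathscr T$, and, edge by edge, the total number of their intersection points with each edge. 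Feeding these local bounds, together with $\chi(Y)=0$ (which constrains the numbers of vertices, edges, faces, and tetrahedra of $\mathscr T$), into the identity $\chi(\Sigma_i)=\#(\text{edge points})-\#(\text{arcs})+\#(\text{disks})$ and summing over $i$, I would obtain a lower bound for $\chi(\Sigma_1)+\chi(\Sigma_2)+\chi(\Sigma_3)$ that is linear in $|\mathscr T|$. It then remains to clean up: as $Y$ is irreducible, every embedded $2$--sphere bounds a ball, so the sphere components of each $\Sigma_i$ may be discarded without changing its class; $Y$ contains no embedded $\mathbb RP^2$, since otherwise $Y=\mathbb RP^3$, which is excluded because $H_2(\mathbb RP^3;\mathbb Z_2)$ has rank $1$; and atoroidality ensures that, after maximally compressing, none of the remaining components is an essential torus or Klein bottle. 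On the surfaces $S_i$ thus obtained every component has non-positive Euler characteristic, so $\chi_-(S_i)=-\chi(S_i)$, and the Euler-characteristic estimate yields $\sum_i||A_i||\le\sum_i\chi_-(S_i)\le|\mathscr T|-2$.

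The step I expect to be the main obstacle is extracting the sharp additive constant $2$. The crude ``at most three disks per tetrahedron'' estimate is far too weak on its own; it must be sharpened both by choosing the cocycle representatives within their cohomology classes so as to minimize the total disk weight --- adding a coboundary is exactly recoloring the vertices of $\mathscr T$, and an optimal choice suppresses vertex-linking spheres and similar inefficiencies --- and by the delicate global accounting that plays the disk, arc, and edge-point contributions of all three surfaces off against the $|\mathscr T|$ tetrahedra and the remaining simplices of $\mathscr T$. The hypotheses of irreducibility, atoroidality, and $Y\ne\mathbb RP^3$ enter precisely here: they are exactly what let one discard the sphere, projective-plane, and essential-torus components whose presence would otherwise obstruct the passage from $\chi$ to $\chi_-$ and ruin the constant.
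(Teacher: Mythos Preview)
The paper does not prove Theorem~\ref{thm:JRT}; it is quoted verbatim from Jaco--Rubinstein--Tillmann \cite{JRT} and used as a black box. So there is no ``paper's own proof'' to compare your proposal against.

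As for the proposal itself: your outline is in the spirit of the actual argument in \cite{JRT}. They too realise the three nonzero classes by canonical normal surfaces coming from $\mathbb Z_2$--cocycles, exploit the relation $A_1+A_2+A_3=0$ to make the three colourings compatible tetrahedron by tetrahedron, and then run an Euler-characteristic count of the type you describe. Your local enumeration (at most three normal discs per tetrahedron, with equality only when the map $\delta$ is onto $\mathbb Z_2^2$) is correct and is indeed the combinatorial heart of their count.

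Where your write-up stops short of a proof is exactly where you flag it. Two points are missing. First, the passage from the raw normal surfaces $\Sigma_i$ to surfaces realising $||A_i||$ is not just ``discard spheres and compress'': one must argue that the canonical (least-weight) normal representative is actually $\mathbb Z_2$--taut, and this is a separate theorem in \cite{JRT} that uses irreducibility and atoroidality in an essential way. Without it you only bound $\sum(-\chi(\Sigma_i))$, not $\sum||A_i||$. Second, the additive constant $2$ does not fall out of the crude disc count; in \cite{JRT} it comes from a more refined analysis relating the numbers of quadrilateral versus triangular normal discs across the three surfaces to the edge degrees, together with the one-vertex (or minimal) triangulation normalisation. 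Your remark that ``adding a coboundary is recolouring the vertices'' is the right lever, but turning it into the sharp inequality requires the full argument of \cite{JRT}, not just the bookkeeping you sketch.
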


\begin{cor}
Suppose that $Y$ is irreducible and atoroidal, $b_1(Y)=0$, and $Y$ satisfies (\ref{eq:SurjZ2}).  Let $H\subset H_2(Y;\mathbb Z_2)$ be a rank $2$ subgroup, then the complexity of $Y$ is bounded below by
$$-4+2\sum_{A\in H}\max_{\mathfrak s\in\spin(Y)}\big\{d(Y,\mathfrak s+\mathrm{PD}\circ\beta(A))-d(Y,\mathfrak s)\big\}.$$
\end{cor}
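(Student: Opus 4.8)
The plan is to deduce this Corollary directly by combining Theorem~\ref{thm:JRT} (Jaco--Rubinstein--Tillmann) with Corollary~\ref{cor:Z2Norm}; no new geometric input is needed. First I would record that, by the very definition of complexity, there is a triangulation $\mathscr T$ of $Y$ with $|\mathscr T| = C(Y)$. Since $b_1(Y)=0$ forces $Y$ to be a rational homology sphere (in particular closed, orientable, connected), and $Y$ is assumed irreducible and atoroidal, the hypotheses of Theorem~\ref{thm:JRT} are met for the given rank~$2$ subgroup $H\subset H_2(Y;\mathbb Z_2)$, so that
$$C(Y) = |\mathscr T| \ge 2 + \sum_{A\in H}\|A\|.$$

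Next I would bound each summand using Corollary~\ref{cor:Z2Norm}, whose hypotheses (irreducible rational homology sphere satisfying (\ref{eq:SurjZ2})) are exactly what is assumed here. The one point that requires a little care is the zero class: $H$ has four elements, and for $A=0$ the estimate of Corollary~\ref{cor:Z2Norm} only gives the vacuous bound $\|0\|\ge -2$, which would cost an extra $2$ in the final constant. To avoid this, I would instead use the trivial identity $\|0\|=0$, so that $\sum_{A\in H}\|A\| = \sum_{0\ne A\in H}\|A\|$, a sum of the three nonzero terms. Applying Corollary~\ref{cor:Z2Norm} to each of these three classes gives
$$\sum_{0\ne A\in H}\|A\| \ \ge\ -6 + 2\sum_{0\ne A\in H}\max_{\mathfrak s\in\spin(Y)}\big\{d(Y,\mathfrak s+\mathrm{PD}\circ\beta(A))-d(Y,\mathfrak s)\big\}.$$
Since $\beta(0)=0$ makes the corresponding maximum equal to $0$, I can freely reintroduce the $A=0$ term on the right without changing the value, turning the sum over nonzero classes into a sum over all of $H$.

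Combining the two displayed inequalities yields
$$C(Y)\ \ge\ 2 - 6 + 2\sum_{A\in H}\max_{\mathfrak s\in\spin(Y)}\big\{d(Y,\mathfrak s+\mathrm{PD}\circ\beta(A))-d(Y,\mathfrak s)\big\}\ =\ -4 + 2\sum_{A\in H}\max_{\mathfrak s\in\spin(Y)}\big\{d(Y,\mathfrak s+\mathrm{PD}\circ\beta(A))-d(Y,\mathfrak s)\big\},$$
which is the claimed bound. There is essentially no hard step: the whole argument is a two-line composition of cited results, and the only thing to watch is the bookkeeping of the zero class so that the additive constant comes out as $-4$ rather than $-6$.
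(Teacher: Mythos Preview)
Your proposal is correct and follows essentially the same approach as the paper, which simply states that the corollary follows from Corollary~\ref{cor:Z2Norm} and Theorem~\ref{thm:JRT}. Your write-up is more detailed than the paper's one-line proof, and in particular your careful handling of the $A=0$ term (using $\|0\|=0$ on the left and $\beta(0)=0$ on the right) makes explicit the bookkeeping that yields the constant $-4$.
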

\begin{proof}
This follows from Corollary~\ref{cor:Z2Norm} and Theorem~\ref{thm:JRT}.
\end{proof}


\section{The double branched cover of alternating links}\label{sect:DoubCov}

For any link $L\subset S^3$, let $\Sigma(L)$ be the double branched cover of $S^3$ branched along $L$. In this section, we will review
an algorithm of Ozsv\'ath and Szab\'o \cite{OSzAbGr}, which computes the correction terms of $\Sigma(L)$ when $L$ is a non-split alternating link.

Let $L$ be a link with a connected diagram.
The diagram separates the plane into several regions. We can color the regions by black and white, such that any two adjacent regions have different colors.
When the diagram is alternating, we can choose the coloring convention as in Figure~\ref{fig:BW}.

\begin{figure}[ht]
\begin{picture}(340,80)
\put(140,0){\includegraphics[scale=0.8]{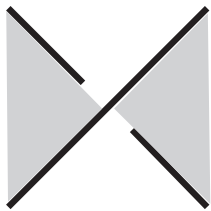}}

\end{picture}
\caption{\label{fig:BW}The coloring convention}
\end{figure}

We now define the black graph $\mathcal B(L)$ of the diagram as follows: the vertices of $\mathcal B(L)$ are in one-to-one correspondence with the black regions,
and the edges of $\mathcal B(L)$ are in one-to-one correspondence with the crossings. For each crossing between two regions $R_1,R_2$, there is an edge connecting the corresponding vertices $v_1,v_2$.

Now we choose a maximal subtree $T$ of $\mathcal B(L)$, and let $\{e_i\}_{i=1}^b$ be the edges in $Z_T=\mathcal B(L)-T$. We orient these $e_i$ in any way we like.
Since $T$ is a maximal subtree, for any $e_i\in Z_T$ there is a unique circuit $C_i$ in $T\cup \{e_i\}$, with the orientation coherent with $e_i$.

Let $V=\mathbb Z^b$ be generated by $e_1,e_2,\dots,e_b$.
We define a quadratic form $Q\co V\otimes V\to \mathbb Z$ as follows.
For any $e_i,e_j$,
$$Q(e_i\otimes e_j)=\pm E(C_i\cap C_j).$$
Here $E(C_i\cap C_j)$ is the number of edges in $C_i\cap C_j$, the sign is positive if the orientations of $C_i$ and $C_j$ are different on $C_i\cap C_j$, and negative if the orientations of $C_i$ and $C_j$ are the same on $C_i\cap C_j$.
In particular, $Q(e_i\otimes e_i)=-E(C_i)$.

Let $V^*=\mathrm{Hom}(V,\Z)$ be the dual group of $V$.
For any $\alpha\in V^*$, define
\begin{equation}\label{eq:DualNorm}
|\alpha|^2=\max_{v\in V\otimes\mathbb R-\{0\}}\frac{(\alpha(v))^2}{Q(v,v)}.
\end{equation}
We define a homomorphism $q\co V\to V^*$ by letting
$$(q(a))(v)=Q(a,v).$$

\begin{notn}
Under the basis $\{e_1,e_2,\dots,e_b\}$, elements in $V$ can be represented by column vectors. Let $\{\alpha_1,\alpha_2,\dots,\alpha_b\}$ be the dual basis of $V^*$, namely, $\alpha_i(e_j)=\delta_{ij}$, then elements in $V^*$ will be represented by row vectors.
\end{notn}

\begin{lem}
For $\alpha=\sum a_i\alpha_i$, let $\tensor a=(a_1,a_2,\dots,a_b)$. Suppose that $Q$ is represented by the symmetric matrix $\tensor Q$, then
\begin{equation}\label{eq:||^2}
|\alpha|^2=\tensor a\tensor Q^{-1}\tensor a^T.
\end{equation}
\end{lem}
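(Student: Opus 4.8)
The plan is to read \eqref{eq:DualNorm} as the standard variational description of the quadratic form on $V^*$ dual to $(V,Q)$ under the map $q$, and then to compute that dual form in the dual basis $\{\alpha_i\}$. The first point is that $Q$ is nondegenerate in the relevant situation: for the alternating diagrams of this section $Q$ is definite (up to sign it is the Goeritz form of the diagram), so $\tensor Q$ is invertible, $q\colon V\to V^*$ is an isomorphism, and $Q(v,v)\neq 0$ for $v\neq 0$, so the quotient in \eqref{eq:DualNorm} is defined on all of $V\otimes\R-\{0\}$. Writing vectors as columns and using $\alpha(v)=\tensor a\,v$, $Q(v,v)=v^{T}\tensor Q\,v$, the claim \eqref{eq:||^2} is that the extreme value of $(\tensor a\,v)^{2}/(v^{T}\tensor Q\,v)$ over $v\neq 0$ is $\tensor a\,\tensor Q^{-1}\tensor a^{T}$.

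First I would produce the extremizer: put $v_0=q^{-1}(\alpha)=\tensor Q^{-1}\tensor a^{T}$. Symmetry of $\tensor Q$, hence of $\tensor Q^{-1}$, gives $\tensor a\,v_0=\tensor a\,\tensor Q^{-1}\tensor a^{T}$ and $v_0^{T}\tensor Q\,v_0=\tensor a\,\tensor Q^{-1}\tensor Q\,\tensor Q^{-1}\tensor a^{T}=\tensor a\,\tensor Q^{-1}\tensor a^{T}$, so the quotient at $v_0$ equals $\tensor a\,\tensor Q^{-1}\tensor a^{T}$ exactly. That this value is extremal then follows from Cauchy--Schwarz: diagonalizing $\tensor Q$ orthogonally, write $\tensor Q=\varepsilon\,S^{T}S$ with $\varepsilon=\pm1$ and $S$ invertible, and substitute $w=Sv$, so that the quotient becomes $\varepsilon\,\langle (S^{-1})^{T}\tensor a^{T},w\rangle^{2}/\langle w,w\rangle$. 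Cauchy--Schwarz confines $\langle (S^{-1})^{T}\tensor a^{T},w\rangle^{2}/\langle w,w\rangle$ to the interval from $0$ to $\langle (S^{-1})^{T}\tensor a^{T},(S^{-1})^{T}\tensor a^{T}\rangle=\tensor a\,S^{-1}(S^{-1})^{T}\tensor a^{T}=\varepsilon\,\tensor a\,\tensor Q^{-1}\tensor a^{T}$, with equality exactly when $w\parallel (S^{-1})^{T}\tensor a^{T}$, i.e.\ $v\parallel v_0$. Multiplying by $\varepsilon$, the quotient ranges over the interval with endpoints $0$ and $\tensor a\,\tensor Q^{-1}\tensor a^{T}$, so its extreme value in the sense of \eqref{eq:DualNorm} is $\tensor a\,\tensor Q^{-1}\tensor a^{T}$, proving \eqref{eq:||^2}. (Equivalently, a Lagrange multiplier computation: extremizing $(\tensor a\,v)^{2}$ on a level set of $v^{T}\tensor Q\,v$ forces $\tensor Q\,v$ to be a scalar multiple of $\tensor a^{T}$, hence $v$ a scalar multiple of $\tensor Q^{-1}\tensor a^{T}$.)

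I do not expect a genuine obstacle here: \eqref{eq:||^2} is the classical fact that the quadratic form dual to $\tensor Q$ is represented by $\tensor Q^{-1}$, and the argument is a short Rayleigh-quotient computation. The one thing to keep track of is the sign $\varepsilon$ recording the definiteness of $Q$, which governs whether $\tensor a\,\tensor Q^{-1}\tensor a^{T}$ is attained from above or from below along the sphere, i.e.\ which endpoint of the above interval \eqref{eq:DualNorm} picks out; for the alternating diagrams at hand $Q$ is definite, so this is routine bookkeeping, as is the translation between $Q$, $q$, $\tensor Q$, and the dual basis $\{\alpha_i\}$.
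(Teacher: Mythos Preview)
Your argument is correct. Both you and the paper identify the same extremizer $v_0=\tensor Q^{-1}\tensor a^{T}$ and evaluate the quotient there; the difference is only in how extremality is justified. The paper phrases it geometrically: on the ellipsoid $X^{T}\tensor Q X=\text{const}$ the linear functional $\tensor a X$ is extremized exactly where the tangent hyperplane $X_0^{T}\tensor Q X=\text{const}$ coincides with the level hyperplane $\tensor a X=\text{const}$, forcing $\tensor a=X_0^{T}\tensor Q$. Your Lagrange-multiplier remark is this same tangency condition in analytic dress, while your main route via Cauchy--Schwarz after writing $\tensor Q=\varepsilon S^{T}S$ is a slightly different (and arguably more self-contained) verification that the critical value is actually the extreme one. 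Either packaging is fine; the paper's version leans on the geometric picture and skips the explicit check that the tangent point gives the maximum, which you supply.
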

\begin{proof}
For any $X_0\in V\otimes \mathbb R$, the tangent hyperplane of the ellipsoid
$$X^T\tensor Q X=X_0^T\tensor QX_0$$
at $X_0$ is
$$X_0^T\tensor Q X=X_0^T\tensor QX_0.$$
On the other hand,
the hyperplane defined by $\alpha$ is
$$\tensor aX=\tensor aX_0.$$
These two hyperplanes coincide when
$$\tensor a=X_0^T\tensor Q.$$
So
\begin{eqnarray*}
|\alpha|^2&=&\left.\frac{|\alpha(X_0)|^2}{Q(X_0,X_0)}\right|_{X_0=(\tensor a\tensor Q^{-1})^T}\\
&=&\frac{|\tensor a(\tensor a\tensor Q^{-1})^T|^2}{(\tensor a\tensor Q^{-1})\tensor Q(\tensor a\tensor Q^{-1})^T}\\
&=&\tensor a\tensor Q^{-1}\tensor a^T.
\end{eqnarray*}
\end{proof}

An element $\kappa\in V^*$ is {\it characteristic} if
$$\kappa(v)\equiv Q(v,v)\pmod2,\qquad\text{for any $v\in V$}.$$
Let $\mathcal C\subset V^*$ be the set of all characteristic elements.
Two characteristic elements $\kappa_1,\kappa_2$ are {\it equivalent} if
$$\kappa_1-\kappa_2=2q(v),\qquad\text{for some $v\in V$}.$$
Let $C_1(V)=\mathcal C/(2q(V))$ be the set of all equivalent classes of characteristic elements,
which is an affine space over the group $H=V^*/(q(V))$.
More precisely, given $c\in C_1(V)$ and $[\alpha]\in H$, $c+2[\alpha]$ is a well-defined element in $C_1(V)$, and $c+2[\alpha]=c$ if and only if $[\alpha]=0$.

For any $c\in C_1(V)$, define
\begin{equation}\label{eq:Corr}
d(V,c)=\max_{\kappa\in c}\frac{|\kappa|^2+b}4.
\end{equation}

\begin{thm}[Ozsv\'ath--Szab\'o]
Suppose that $Y=\Sigma(L)$, where $L$ is an alternating link,
let $Q\co V\otimes V\to\mathbb Z$ be a quadratic form constructed as before,
then $C_1(V)=\spin(Y)$ and $d(V,c)$ is the corresponding correction term of $Y$.
\end{thm}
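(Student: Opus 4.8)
The plan is to realize $Y=\Sigma(L)$ as the boundary of an explicit negative-definite four-manifold built from the black graph, translate all the algebraic data into that geometry, and then invoke Ozsv\'ath--Szab\'o's inequality for negative-definite four-manifolds \cite{OSzAbGr} to obtain one inequality for free, the reverse one coming from a sharpness argument. First I would push the black checkerboard surface $F_{\mathcal B}\subset S^3$ into $B^4$ and let $W_{\mathcal B}$ be the double cover of $B^4$ branched over it, so that $\d W_{\mathcal B}=\Sigma_2(S^3,L)=Y$. Presenting $W_{\mathcal B}$ as $B^4$ with $b$ two-handles attached along a framed link read off from the circuits $C_i$ shows $\pi_1 W_{\mathcal B}=1$, $H_2(W_{\mathcal B};\Z)\cong V$, and that the intersection form is exactly $Q$; the coloring convention of Figure~\ref{fig:BW} is precisely the hypothesis that forces $Q$ to be negative definite (indeed $Q(e_i\otimes e_i)=-E(C_i)<0$ and the cross-terms cooperate). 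Since $Y$ is a rational homology sphere, $\Spinc(W_{\mathcal B})\to\Spinc(Y)$ is surjective, and two $\Spinc$ structures on $W_{\mathcal B}$ restrict identically to $Y$ if and only if their first Chern classes differ by an element of $2\cdot\mathrm{im}\big(H^2(W_{\mathcal B},Y)\to H^2(W_{\mathcal B})\big)=2q(V)\subset V^*$; as $c_1$ identifies $\Spinc(W_{\mathcal B})$ with the characteristic covectors $\mathcal C$, this is exactly the identification $\Spinc(Y)\cong\mathcal C/2q(V)=C_1(V)$, equivariant for the $H=V^*/q(V)\cong H^2(Y;\Z)$ actions. Finally, for $\mathfrak t$ with $c_1(\mathfrak t)=\kappa=\sum a_i\alpha_i$, the rational self-intersection $c_1(\mathfrak t)^2$, computed from the rational lift of $\kappa$ to $H^2(W_{\mathcal B},Y;\Q)$, equals $\tensor a\tensor Q^{-1}\tensor a^T=|\kappa|^2$ by the Lemma.

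With this dictionary, one inequality is immediate: Ozsv\'ath--Szab\'o's negative-definiteness bound \cite{OSzAbGr} gives $c_1(\mathfrak t)^2+b\le 4\,d(Y,\mathfrak t|_Y)$ for every $\mathfrak t\in\Spinc(W_{\mathcal B})$, and maximizing over the $\mathfrak t$ lying over a fixed $c\in\Spinc(Y)=C_1(V)$ and dividing by $4$ yields, via Step~1, the inequality $d(V,c)\le d(Y,c)$.

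The reverse inequality --- sharpness --- is the hard part. The key structural input is that $Y$ is an $L$--space: one proves this by inducting on the crossing number, using the surgery (unoriented skein) exact triangle $\HF(\Sigma(L))\to\HF(\Sigma(L_0))\to\HF(\Sigma(L_\infty))\to$ together with the additivity $\det(L)=\det(L_0)+\det(L_\infty)$ of the determinant under the relevant resolution, which forces $\rk\HF(Y)=\det(L)=|H_1(Y;\Z)|$. Being an $L$--space, $HF^+(Y,\mathfrak s)=\mathcal T^+$ with bottom grading $d(Y,\mathfrak s)$ and $HF_{\mathrm{red}}(Y,\mathfrak s)=0$. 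Now all the inequalities $d(V,c)\le d(Y,c)$ from Step~2 must be equalities, because $\sum_{\mathfrak s\in\Spinc(Y)}d(Y,\mathfrak s)$ is independently determined --- for the $L$--space $Y$ it is proportional to the Casson--Walker invariant $\lambda(Y)$ (since $HF_{\mathrm{red}}=0$), which is in turn computable from the Goeritz form --- and this quantity equals $\sum_{c\in C_1(V)}d(V,c)$ by a lattice/residue computation; a family of termwise inequalities whose two sums agree must hold termwise, giving $d(V,c)=d(Y,c)$ for all $c$. (An alternative to the sum argument: $\overline L$ is again alternating, so $-Y=\Sigma(\overline L)$ bounds a negative-definite four-manifold whose form is the white Goeritz form $Q'$; Step~2 applied to $-Y$ together with $d(-Y,\bar{\mathfrak s})=-d(Y,\mathfrak s)$ gives an upper bound $d(Y,c)\le -d(V',\bar c)$, and one then proves the purely lattice-theoretic duality $-d(V',\bar c)=d(V,c)$ between the black and white checkerboard forms.) The technical heart, and the step I expect to be the main obstacle, is exactly this reconciliation of the Floer-theoretic invariant $d(Y,\cdot)$ with the combinatorial maximum $\max_{\kappa\in c}(|\kappa|^2+b)/4$ --- either controlling the rank and grading bookkeeping in the exact triangle and the two evaluations of $\sum d$, or carrying out the lattice comparison of the two Goeritz forms --- whereas Steps~1 and~2 are essentially linear algebra plus a cited theorem.
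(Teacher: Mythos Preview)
The paper does not prove this theorem at all: it is stated as a result of Ozsv\'ath and Szab\'o and cited from \cite{OSzAbGr} (and implicitly \cite{OSzDoubCov}) without proof, as the opening line of Section~\ref{sect:DoubCov} makes clear (``we will review an algorithm of Ozsv\'ath and Szab\'o''). So there is no proof in the paper to compare your proposal against.

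That said, your outline is essentially the original Ozsv\'ath--Szab\'o argument. Step~1 (identifying $W_{\mathcal B}$, its intersection form, and the $\Spinc$ dictionary) is the Gordon--Litherland construction together with standard long-exact-sequence algebra, and Step~2 is exactly the negative-definite inequality of \cite{OSzAbGr}, yielding $d(V,c)\le d(Y,c)$. For sharpness, however, your sum argument (a) has a genuine gap: while it is true that for an $L$--space $\sum_{\mathfrak s}d(Y,\mathfrak s)$ is determined by the Casson--Walker invariant, the matching claim that $\sum_{c}d(V,c)$ equals the same quantity is a nontrivial lattice identity that you have asserted but not justified, and it is not a standard cited fact. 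Your alternative (b) --- applying Step~2 to $-Y=\Sigma(\overline L)$ via the white Goeritz form and then proving a lattice duality $d(V,c)+d(V',\bar c)=0$ --- is a valid strategy, but the ``purely lattice-theoretic duality'' is again a real statement that needs proof. In the original papers sharpness is obtained differently: the skein exact triangle is used not only to show $\Sigma(L)$ is an $L$--space but, together with the absolute grading bookkeeping, to prove inductively that the cobordism $W_{\mathcal B}$ is \emph{sharp}, i.e.\ that some $\mathfrak t$ over each $c$ realizes equality in the negative-definite bound. You correctly flagged this reconciliation as the technical heart; the inductive sharpness argument via the triangle is the cleanest way through it.
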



\section{Computation}\label{sect:Comp}

In this section, we use Theorem~\ref{thm:NOgenus} to determine the $\Z_2$-Thurston norm of the double branched cover of some alternating closed 3-braids.

Let $\sigma_1$, $\sigma_2$ be the two standard generators of $B_3$.  We summarize our results below.

\begin{prop}\label{prop:Braid2n}
Let $L$  be the closure of the braid $$\sigma=\sigma_1\sigma_2^{-2a_1}\sigma_1\sigma_2^{-2a_2}\cdots
\sigma_1\sigma_2^{-2a_{2n-1}}\sigma_1\sigma_2^{-2a_{2n}},$$ where $a_i,n>0$, then the $\Z_2$--Thurston norms of the three nonzero
homology classes in $H_2(\Sigma(L);\Z_2)$ are
$$\sum_{i\text{ odd}}a_{i}+n-2,\quad \sum_{i\text{ even}}a_{i}+n-2,\quad \sum_{i=1}^{2n}a_{i}-2.$$
\end{prop}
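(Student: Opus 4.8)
The plan is to combine two halves: an upper bound on the $\mathbb{Z}_2$--Thurston norms coming from an explicit surface construction, and a matching lower bound coming from Theorem~\ref{thm:NOgenus} applied to the correction terms of $\Sigma(L)$ computed via the Ozsv\'ath--Szab\'o algorithm of Section~\ref{sect:DoubCov}. First I would set up the combinatorics of the diagram: the closed braid $\sigma = \sigma_1\sigma_2^{-2a_1}\sigma_1\cdots\sigma_1\sigma_2^{-2a_{2n}}$ is alternating (the $\sigma_1$ and $\sigma_2^{-1}$ crossings have opposite sign), so after choosing the black/white coloring of Figure~\ref{fig:BW} I would identify the black graph $\mathcal{B}(L)$, pick a maximal subtree $T$, and write down the symmetric matrix $\tensor{Q}$ of the resulting quadratic form. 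With $2n$ crossings of type $\sigma_1$ and $2\sum a_i$ crossings of type $\sigma_2$, I expect $b = \mathrm{rk}\,H_2(\Sigma(L);\mathbb{Z}_2)$-many generators once the rank-two nature of $H_1(\Sigma(L);\mathbb{Z}_2)$ is built in; concretely $H_2(\Sigma(L);\mathbb{Z}_2)\cong\mathbb{Z}_2^2$ here, so there are exactly three nonzero classes $\alpha_1,\alpha_2,\alpha_3$ to handle, matching the three quantities in the statement.

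For the upper bound, I would exhibit for each nonzero class a connected nonorientable surface of the appropriate genus, most naturally by taking the pullback in $\Sigma(L)$ of a checkerboard (spanning) surface of the alternating diagram, or a sub-band-surface of it, and computing its Euler characteristic: a surface built from some of the $\sigma_1$-bands and the $\sigma_2^{-2a_i}$-twisted bands. The count of bands versus disks should give $-\chi = \sum_{i\text{ odd}}a_i + n - 2$ (resp. the even sum, resp. the total) after one checks these surfaces are non-separating, incompressible, and carry the right $\mathbb{Z}_2$--homology class; this yields $\|\alpha_i\|\le$ the stated values. For the lower bound I would compute, for each $i$, the correction-term difference $\max_{\mathfrak{s}}\{d(Y,\mathfrak{s}+\mathrm{PD}\circ\beta(\alpha_i)) - d(Y,\mathfrak{s})\}$ using formula~(\ref{eq:Corr}) and the norm formula~(\ref{eq:||^2}): this reduces to an optimization of $\tensor{a}\tensor{Q}^{-1}\tensor{a}^T$ over characteristic vectors $\kappa$ in the relevant equivalence class. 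Plugging into Theorem~\ref{thm:NOgenus} (via Lemma~\ref{lem:TautConn} if needed, checking the inequalities $h_i+h_{i+1}\ge h_{i+2}+2$ to pass from connected-genus bounds to $\mathbb{Z}_2$--Thurston norm, since $H_2(\Sigma(L);\mathbb{Z}_2)\cong\mathbb{Z}_2^2$) should give $h_i - 2\le\|\alpha_i\|$ with $h_i$ matching the upper bound plus $2$, closing the gap.

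The main obstacle I anticipate is the correction-term computation: inverting $\tensor{Q}$ explicitly for the family and then carrying out the discrete maximization over the characteristic coset to get a \emph{closed-form} answer in $a_1,\dots,a_{2n}$ and $n$. The matrix $\tensor{Q}$ will be sparse (roughly tridiagonal/arrow-shaped from the circuit structure of the braid closure), so I would look for a pattern by computing small cases ($n=1,2$) and then prove the general formula by an induction on $n$ or by an explicit LDL-type factorization; the sign conventions in $Q(e_i\otimes e_j)=\pm E(C_i\cap C_j)$ and the bookkeeping of which crossings lie on which circuit $C_i$ are where errors are easy to make. A secondary subtlety is verifying that the constructed surfaces are genuinely incompressible and $\mathbb{Z}_2$--taut (so that no smaller-complexity disconnected competitor exists); here the irreducibility of $\Sigma(L)$ — which follows since $L$ is a non-split prime alternating link, hence $\Sigma(L)$ is irreducible — and the application of Lemma~\ref{lem:TautConn} are the right tools, provided the numerical inequalities among the $h_i$ hold, which I would check directly from the three explicit values (e.g. $(\sum_{\text{odd}}a_i+n)+(\sum_{\text{even}}a_i+n)=(\sum a_i)+2n\ge(\sum a_i)+2$ since $n\ge1$, and the two cyclic variants similarly).
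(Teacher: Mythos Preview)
Your overall strategy matches the paper's exactly: upper bounds from explicit surfaces in $\Sigma(L)$, lower bounds from Theorem~\ref{thm:NOgenus} via the Ozsv\'ath--Szab\'o algorithm, and Lemma~\ref{lem:TautConn} to bridge the connected-genus bound and the $\mathbb{Z}_2$--Thurston norm. Your check of the triangle inequalities for Lemma~\ref{lem:TautConn} is also correct.

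However, you misidentify the main obstacle and thereby miss the key simplification that makes the computation feasible. You propose to invert the $2n\times 2n$ matrix $\tensor{Q}$ and then optimize $\tensor{a}\tensor{Q}^{-1}\tensor{a}^T$ over the characteristic coset, perhaps by induction or an LDL factorization. The paper never inverts $\tensor{Q}$. Because all diagonal entries of $\tensor{Q}$ are even, the characteristic coset is $2V^*$, and every equivalence class of characteristic elements is represented by $\kappa=q(X)$ for an integer vector $X$ with prescribed parity of coordinates. Then
\[
|\kappa|^2=(\tensor{Q}X)^T\tensor{Q}^{-1}(\tensor{Q}X)=X^T\tensor{Q}X=-\sum_{i=1}^{2n}\bigl(2a_ix_i^2+(x_i-x_{i+1})^2\bigr),
\]
a transparent sum of squares whose maxima under the four parity constraints ($x_i$ all even; $x_i\equiv i$; $x_i\equiv i+1$; $x_i$ all odd) are read off by inspection as $0$, $-2(n+\sum_{\text{odd}}a_i)$, $-2(n+\sum_{\text{even}}a_i)$, $-2\sum a_i$. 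This is the whole computation; no closed-form inverse or inductive argument is needed.

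For the upper bound, the paper does not use checkerboard surfaces. Since $\sigma$ is a pure braid, $L$ has three components, each bounding an immersed disk in $S^3$ meeting the other two components transversely; the preimage of that disk in $\Sigma(L)$ is a closed surface whose Euler characteristic is computed by Riemann--Hurwitz. Counting intersection points with the other two strands gives exactly $-\chi=\sum_{\text{odd}}a_i+n-2$, $\sum_{\text{even}}a_i+n-2$, and $\sum a_i-2$ for the three components. Your band-surface idea may also work, but this is more direct.
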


\begin{prop}\label{prop:Braid3}
Let $L$  be the closure of the braid $\sigma_1\sigma_2^{-2a-1}\sigma_1\sigma_2^{-2b-1}\sigma_1\sigma_2^{-2c-1}$, where $a,b,c$ are nonnegative integers.  Then the $\Z_2$-Thurston norms of the three nonzero homology classes in $H_2(\Sigma(L);\Z_2)$ are $a+b$, $b+c$ and $c+a$.
\end{prop}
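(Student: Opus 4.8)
The plan is to compute $\Sigma(L)$ explicitly as a double branched cover and then apply Theorem~\ref{thm:NOgenus} together with the Ozsv\'ath--Szab\'o algorithm from Section~\ref{sect:DoubCov}. First I would fix an alternating diagram for the closed braid $\sigma_1\sigma_2^{-2a-1}\sigma_1\sigma_2^{-2b-1}\sigma_1\sigma_2^{-2c-1}$; because all the $\sigma_2$--exponents are negative and the $\sigma_1$'s are positive, after a suitable rotation this diagram is alternating, and its black graph $\mathcal B(L)$ is a ``theta-like'' graph with two vertices joined by three arcs of lengths controlled by $2a+1,2b+1,2c+1$ (plus a contribution from the three $\sigma_1$ crossings). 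Choosing a maximal subtree $T$ leaves $|Z_T|=b$ edges where $b=\operatorname{rank} H_2(\Sigma(L);\mathbb Z_2)$; here one expects $b=2$, consistent with the statement that there are three nonzero classes. I would write down the $2\times 2$ symmetric matrix $\tensor Q$ in terms of $a,b,c$, invert it, and thereby get an explicit formula for $|\kappa|^2$ and hence for $d(\Sigma(L),\mathfrak s)$ via (\ref{eq:Corr}).

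Next, for each of the three nonzero classes $A\in H_2(\Sigma(L);\mathbb Z_2)$ I would identify the corresponding element $\mathrm{PD}\circ\beta(A)\in H_1(\Sigma(L);\mathbb Z)$ as the appropriate nonzero class in $H=V^*/q(V)$, and then evaluate
$$\max_{\mathfrak s}\big\{d(\Sigma(L),\mathfrak s+\mathrm{PD}\circ\beta(A))-d(\Sigma(L),\mathfrak s)\big\}$$
by a direct optimization over characteristic vectors. This is a finite quadratic optimization over a coset of a lattice, and with the explicit $\tensor Q^{-1}$ in hand it should reduce to minimizing a convex quadratic over a shifted copy of $\mathbb Z^2$; I expect the three answers to come out as $a+b$, $b+c$, $c+a$ (up to the additive normalization), matching the claimed norms after subtracting the universal $-2$ in Theorem~\ref{thm:NOgenus}. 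This yields the lower bounds $\|A\|\ge a+b$, etc. The hypothesis (\ref{eq:SurjZ2}) need not hold here, but since $\operatorname{rank}H_2(\Sigma(L);\mathbb Z_2)=2$ I would instead invoke Lemma~\ref{lem:TautConn}: one checks the triangle inequalities $h_i+h_{i+1}\ge h_{i+2}+2$ for the connected-surface genus bounds $h_i$ coming from Theorem~\ref{thm:NOgenus}, which amounts to checking, say, $(a+b+2)+(b+c+2)\ge (c+a+2)+2$, i.e. $2b+2\ge 2$, true; this upgrades the bounds to $\|\alpha_i\|\ge h_i-2$ even without (\ref{eq:SurjZ2}).

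For the matching upper bounds I would construct explicit nonorientable (or orientable, if a class happens to be represented by a torus) surfaces in $\Sigma(L)$ realizing $\chi_- = a+b$, $b+c$, $c+a$. The natural candidates come from the branched-cover picture: arcs and Seifert-type surfaces in $S^3$ disjoint from (or meeting nicely) $L$ lift to surfaces in $\Sigma(L)$, and the ``obvious'' spanning surfaces for the black and white regions of the alternating diagram lift to surfaces whose genus one computes combinatorially from the diagram. Alternatively, since $\Sigma(L)$ is a small Seifert fibered space or a Brieskorn-type manifold for these 3-braids (indeed a double branched cover of a 3-braid closure is a Seifert fibered space over $S^2$ with at most three exceptional fibers, or a related small graph manifold), one can read off vertical/horizontal surfaces directly. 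Matching these upper bounds against the Heegaard Floer lower bounds pins down the norms exactly.

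The main obstacle I anticipate is the bookkeeping in the two intertwined computations: getting the black graph, the quadratic form $\tensor Q$, and especially the correspondence between the three $\mathbb Z_2$--homology classes and the cosets in $H=V^*/q(V)$ exactly right, so that the optimization in (\ref{eq:Corr}) is performed over the correct coset for each $A$; an off-by-one in the arc lengths of $\mathcal B(L)$ (the parities $2a+1$ versus $2a$ matter, cf.\ the difference between Proposition~\ref{prop:Braid2n} and this one) would scramble the final answer. The second delicate point is the construction of sharp upper-bound surfaces: one must be sure the lifted surfaces are actually $\mathbb Z_2$--taut (no compressible pieces, no $S^2$ or $\mathbb{RP}^2$ summands) so that their $\chi_-$ genuinely equals the norm rather than merely bounding it above.
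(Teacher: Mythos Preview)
Your overall strategy matches the paper's: compute $\tensor Q$ from the black graph, optimize $|\kappa|^2$ over each coset to get $d$-invariant differences, apply Theorem~\ref{thm:NOgenus} for lower bounds on the connected nonorientable genus, upgrade via Lemma~\ref{lem:TautConn}, and realize the upper bounds by lifting disks bounded by the link components. Your use of Lemma~\ref{lem:TautConn} and the check $(a+b+2)+(b+c+2)\ge (c+a+2)+2$ is exactly what the paper does.

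There is, however, a concrete error in your setup. You assert that $|Z_T|=\operatorname{rank}H_2(\Sigma(L);\mathbb Z_2)=2$, so that $\tensor Q$ is $2\times 2$. This conflates two different numbers: $|Z_T|$ is the first Betti number of the black graph $\mathcal B(L)$ (equivalently, the rank of the lattice $V$), whereas $\operatorname{rank}H_2(\Sigma(L);\mathbb Z_2)$ is the $2$-rank of the finite group $V^*/q(V)$. These need not agree. For this braid the black graph is a triangle with a central vertex joined to the three corners (the analogue of the $2n$-gon in Figure~\ref{fig:family2} with $2n$ replaced by $3$ and side-lengths $2a+1,2b+1,2c+1$), so $|Z_T|=3$ and
\[
\tensor Q=\begin{pmatrix}-2a-3&1&1\\1&-2b-3&1\\1&1&-2c-3\end{pmatrix}.
\]
In particular the characteristic coset is $\{(a_1,a_2,a_3):a_i\ \text{odd}\}$, not the even coset, and the base characteristic vector is $\kappa_0=(1,-1,1)$ rather than $0$. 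Your ``theta graph with two vertices'' picture and the resulting $2\times2$ matrix would give the wrong form, and the optimization would not produce the right numbers. Once you correct to the $3\times3$ form, the optimization is over $\mathbb Z^3$ with the appropriate parity constraints on the $x_i$; completing the square (as in the paper's lemma) shows $\kappa_0$ and $\kappa_0+q(e_i-e_j)$ are maximizers, and $2(d(Y,[\kappa_0])-d(Y,[\kappa_1]))=a+b+2$, etc. The rest of your outline then goes through unchanged.
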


We point out that each of the family considered above consists of pure braids only.  Hence, $H_2(\Sigma(L);\Z_2)\cong \Z_2^2$ and there are exactly three nonzero $\Z_2$-homology classes.

\subsection{Proof of Proposition~\ref{prop:Braid2n}}

\noindent{\bf Case 1.} $n=1$, namely, $\sigma=\sigma_1\sigma_2^{-2a}\sigma_1\sigma_2^{-2b}$, $a,b>0$.

\begin{figure}[ht]
\centering
\includegraphics[scale=0.3]{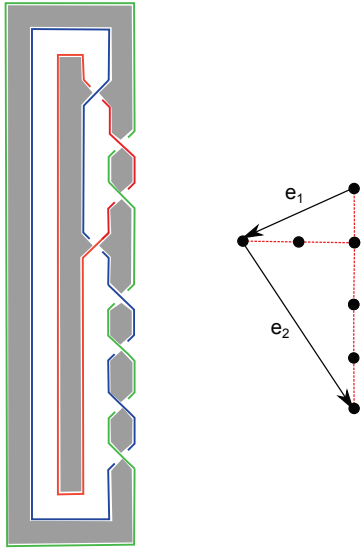}
\caption{The closure of the pure braid $\sigma=\sigma_1\sigma_2^{-2a}\sigma_1\sigma_2^{-2b}$ ($a=1, b=2$ here) and its black graph on the right.  The dashed red edges indicate a maximal subtree $T$, and the remaining two edges are oriented.}
\label{family1}
\end{figure}

Using the notation from the previous section, we construct the black graph $\mathcal{B}(L)$ in Figure~\ref{family1}.  Choose a maximal subtree $T$, then $Z_T$ has two edges $e_1, e_2$.  So $V=\langle e_1,e_2 \rangle $.  The quadratic form is represented by the symmetric matrix $$\tensor Q=
                     \begin{pmatrix}
                       -2a-2 & 2 \\
                       2 & -2b-2 \\
                     \end{pmatrix}
                     $$
The set of characteristic elements is
$$\mathcal{C}=\{a_1\alpha_1+a_2\alpha_2 \,| \, a_1\equiv a_2 \equiv 0 \pmod2 \}.$$

Let $$\kappa_0=(0,0),\;\; \;\kappa_1=q(e_1)=(-2a-2,2),$$  $$\kappa_2=q(e_2)=(2,-2b-2), \;\;\;\kappa_3=q(e_1+e_2)=(-2a,-2b)$$ be characteristic elements in the affine space $\mathcal{C}_1(V)$.    Straightforward computation yields
$$|\kappa_0|^2=\kappa_0\tensor Q^{-1}\kappa_0^T=0, $$
$$|\kappa_1|^2=\kappa_1\tensor Q^{-1}\kappa_1^T=-2(1+a),  $$
$$|\kappa_2|^2=\kappa_2\tensor Q^{-1}\kappa_2^T=-2(1+b),  $$
$$|\kappa_3|^2=\kappa_3\tensor Q^{-1}\kappa_3^T=-2(a+b).  $$

\begin{lem}
Each of the characteristic elements $\kappa_i$ maximizes the above quadratic form within its equivalent class of characteristic elements, i.e.,
$$|\kappa_i+2q(v)|^2\leq |\kappa_i|^2$$ for any $v\in V$, $i=0,1,2,3$.
\end{lem}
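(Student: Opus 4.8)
The plan is to reduce the statement to an elementary inequality for the integral quadratic form $Q$ and then verify that inequality by a completion of squares tailored to integer arguments. The first step is the algebraic identity
\[
|\kappa+2q(v)|^2=|\kappa|^2+4\bigl(Q(v,v)+\kappa(v)\bigr),\qquad \kappa\in V^*,\ v\in V.
\]
This follows from the formula $|\alpha|^2=\tensor a\tensor Q^{-1}\tensor a^T$ of the preceding lemma together with the observation that $q(v)\in V^*$ is represented by the row vector $v^T\tensor Q$ (since $(q(v))(e_j)=Q(v,e_j)$): writing $\tensor k$ for the row vector of $\kappa$ and expanding $(\tensor k+2v^T\tensor Q)\tensor Q^{-1}(\tensor k+2v^T\tensor Q)^T$, the symmetry of $\tensor Q$ collapses all the cross terms to $4\tensor k v+4v^T\tensor Q v=4(\kappa(v)+Q(v,v))$. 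Consequently, the desired inequality $|\kappa_i+2q(v)|^2\le|\kappa_i|^2$ for all $v\in V$ is equivalent to $Q(v,v)+\kappa_i(v)\le 0$ for every $v=(x,y)\in\Z^2$.

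For the second step I would use the decomposition $-Q(v,v)=2ax^2+2by^2+2(x-y)^2$, read off directly from $\tensor Q$, and plug in the explicit row vectors $\kappa_0=(0,0)$, $\kappa_1=(-2a-2,2)$, $\kappa_2=(2,-2b-2)$, $\kappa_3=(-2a,-2b)$. Regrouping the resulting expressions gives
\begin{align*}
Q(v,v)+\kappa_0(v)&=-2ax^2-2by^2-2(x-y)^2,\\
Q(v,v)+\kappa_1(v)&=-2a\,x(x+1)-2by^2-2(x-y)(x-y+1),\\
Q(v,v)+\kappa_2(v)&=-2ax^2-2b\,y(y+1)-2(x-y)(x-y-1),\\
Q(v,v)+\kappa_3(v)&=-2a\,x(x+1)-2b\,y(y+1)-2(x-y)^2.
\end{align*}
Since $a,b>0$, since squares are nonnegative, and since $t(t\pm 1)\ge 0$ for every integer $t$, each right-hand side is a sum of nonpositive terms, which is exactly the inequality we need. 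In practice only three cases have to be written out, as the identity for $\kappa_2$ is the mirror image of that for $\kappa_1$ under $x\leftrightarrow y$, $a\leftrightarrow b$.

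The only genuine subtlety is that integrality of $(x,y)$ must be used and cannot be bypassed: over $\R^2$ the function $v\mapsto Q(v,v)+\kappa_i(v)$ attains the positive maximum $-\tfrac14|\kappa_i|^2$, equal to $\tfrac{1+a}{2}$, $\tfrac{1+b}{2}$, $\tfrac{a+b}{2}$ for $i=1,2,3$, so a bare completion of squares over the reals is insufficient. The role of the regrouping above is to push all of the ``defect'' into terms of the form $t(t\pm 1)$, whose minimum over $\Z$ is $0$ rather than $-\tfrac14$; this is what forces $v=0$ to be the integer maximizer. Everything past this observation is routine bookkeeping.
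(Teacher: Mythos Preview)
Your proof is correct and follows essentially the same route as the paper: both rely on the decomposition $-Q(v,v)=2ax^{2}+2by^{2}+2(x-y)^{2}$ together with integrality to locate the maximizer in each equivalence class. The only cosmetic difference is that the paper parametrizes each class directly as $\kappa=q(me_{1}+ne_{2})$ with prescribed parities of $(m,n)$ and reads off $|\kappa|^{2}=Q(me_{1}+ne_{2},\,me_{1}+ne_{2})=-2\bigl((m-n)^{2}+am^{2}+bn^{2}\bigr)$, whereas you center at $\kappa_{i}$ and encode the parity constraint via the inequality $t(t\pm1)\ge 0$ for $t\in\Z$; after the substitution $m=2x+\epsilon_{1}$, $n=2y+\epsilon_{2}$ these are literally the same computation.
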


\begin{proof}
The characteristic elements equivalent to $\kappa_i$ have the form
$$\kappa=q(me_1+ne_2), \;\; m,n\in \Z$$
where
\begin{itemize}
\item $m,n$ even, when $i=0$
\item $m$ odd $n$ even, when $i=1$
\item $m$ even $n$ odd, when $i=2$
\item $m, n$ odd, when $i=3$
\end{itemize}
and
\begin{eqnarray*}
|\kappa|^2 &=&\kappa\tensor Q^{-1}\kappa^T \\
&=&(m,n)\tensor Q\begin{pmatrix}m\\n\end{pmatrix}\\
&=&-(2a+2)m^2-(2b+2)n^2+4mn\\
&=&-2((m-n)^2+am^2+bn^2).
\end{eqnarray*}
Since $a,b>0$, it is clear that each $|\kappa_i|^2$ is a maximizer for the given parity on $m$ and $n$.
\end{proof}

Hence, we can compute the correction terms via the formula (\ref{eq:Corr}) and apply Theorem~\ref{thm:NOgenus} and Lemma~\ref{lem:TautConn} to get a lower bound to the $\Z_2$-Thurston norm of $\Sigma(L)$.  Note that $\kappa_1-\kappa_0,\kappa_2-\kappa_0,\kappa_3-\kappa_0$ represent the three different homology classes of order 2 in $H^2(Y)$. For any $\alpha\in H_2(Y;\Z_2)$, let $h(\alpha)$ be the minimal genus of closed connected non-orientable surfaces representing $\alpha$. We obtain:
$$h(\beta^{-1}\mathrm{PD}^{-1}(\kappa_1-\kappa_0))\geq 2(d(Y,[\kappa_0])-d(Y,[\kappa_1]))=a+1.$$
$$h(\beta^{-1}\mathrm{PD}^{-1}(\kappa_2-\kappa_0))\geq 2(d(Y,[\kappa_0])-d(Y,[\kappa_2]))=b+1.$$
$$h(\beta^{-1}\mathrm{PD}^{-1}(\kappa_3-\kappa_0))\geq 2(d(Y,[\kappa_0])-d(Y,[\kappa_3]))=a+b.$$
Thus by Lemma~\ref{lem:TautConn}, the corresponding $\Z_2$-Thurston norms are bounded below by $$a-1,b-1,a+b-2.$$

Finally, we show that the lower bounds here are sharp.  Indeed, the lift of the disk bounded by each component of the link in $\Sigma(L)$ has Euler characteristic exactly $1-a$, $1-b$ and $2-a-b$, respectively.  (For example, the disk bounded by the green curve in Figure~\ref{family1} intersects with the red and the blue curves at $a+b$ points in total.  By the Riemann--Hurwitz formula, the Euler characteristic of its double branched cover is $2\chi(D^2)-(a+b)=2-a-b$.)

\

\noindent{\bf Case 2.}
 $n>1$.

\begin{figure}[ht]
\centering
\scalebox{0.35}{\includegraphics*[0,200][590,680]{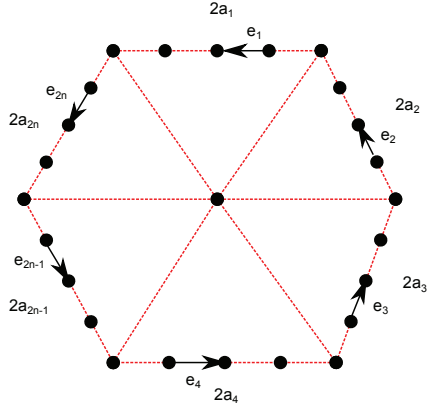}}
\caption{The black graph of the closure of $\sigma$ in this case. The graph is supported in a $2n$--gon $P$. The $i$th side of $P$ consists of $2a_i$ edges, and there is an edge connecting the center of $P$ with each corner vertex of $P$. The maximal tree $T$ is obtained by removing an edge from each side of $P$.}
\label{fig:family2}
\end{figure}

This case is very similar to the previous case.  The quadratic form $Q$ is represented by the symmetric matrix
$$\tensor Q=
    \begin{pmatrix}
      -2a_1-2 & 1 & 0 &\cdots &0 & 1 \\
      1 & -2a_2-2 & 1 &\cdots &0 & 0 \\
      0 & 1 & -2a_3-2 &\cdots &0 &0 \\
     \vdots &\vdots &\vdots &\ddots &\vdots&\vdots\\
     0 &0 &0 &\cdots &-2a_{2n-1}-2 &1\\
      1 & 0 &0 &\cdots & 1 & -2a_{2n}-2 \\
    \end{pmatrix}.
$$
Given $X=\sum_{i=1}x_ie_i$, we have
$$Q(X,X)=X^T\tensor QX=-\sum_{i=1}^{2n}\big(2a_ix_i^2+(x_i-x_{i+1})^2\big),$$
where the subscripts are understood modulo $2n$.

The set of characteristic elements is
$$\mathcal{C}=\Big\{\sum_{i=1}^{2n}a_i\alpha_i\,\Big| \, a_i\equiv 0 \pmod 2 \Big\},$$ and $q(e_i)=(\tensor Qe_i)^T$ is the $i$th row of $\tensor Q$.

Let $$\kappa_0=\tensor 0\in V^*,$$
  $$\kappa_1=q(\sum_{i\text{ odd}}e_i)=(-2a_1-2,2,-2a_3-2,2,\cdots,-2a_{2n-1}-2,2),$$
$$\kappa_2=q(\sum_{i\text{ even}}e_i)=(2,-2a_2-2,2,-2a_4-2,\cdots,2,-2a_{2n}-2),$$
  $$\kappa_3=\kappa_1+\kappa_2=(-2a_1,-2a_2,-2a_3,-2a_4,\cdots,-2a_{2n-1},-2a_{2n})$$ be characteristic elements in the affine space $\mathcal{C}_1(V)$.    Straightforward computation yields
$$|\kappa_0|^2=\kappa_0\tensor Q^{-1}\kappa_0^T=0, $$
$$|\kappa_1|^2=\kappa_1\tensor Q^{-1}\kappa_1^T=\kappa_1\sum_{i\text{ odd}}e_i=-2(n+\sum_{i\text{ odd}}a_i),  $$
$$|\kappa_2|^2=\kappa_2\tensor Q^{-1}\kappa_2^T=\kappa_2\sum_{i\text{ even}}e_i=-2(n+\sum_{i\text{ even}}a_i),  $$
$$|\kappa_3|^2=\kappa_3\tensor Q^{-1}\kappa_3^T=\kappa_3(\sum_{i=1}^{2n}e_i)=-2\sum_{i=1}^{2n}a_i.  $$

\begin{lem}
Each of the characteristic element $\kappa_j$ maximizes the above quadratic form within its equivalent class of characteristic elements, i.e.,
$$|\kappa_j+2q(v)|^2\leq |\kappa_j|^2$$ for any $v\in V$, $j=0,1,2,3$.
\end{lem}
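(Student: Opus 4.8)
The plan is to imitate the proof of the analogous lemma in Case~1, using the explicit formula for $Q$ on an arbitrary vector together with a parity count around the $2n$--gon $P$.

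\textbf{Step 1 (parametrize each class).} First I would describe the four relevant equivalence classes concretely. Since $\tensor Q$ is nondegenerate, $q\co V\to V^*$ is injective, so an identity $q(v)=q(v')+2q(u)$ forces $v-v'=2u$; hence the characteristic elements equivalent to a fixed $q(w)$ are exactly the $q(v)$ with $v\equiv w\pmod 2$ componentwise. Taking $w$ to be $0$, $\sum_{i\text{ odd}}e_i$, $\sum_{i\text{ even}}e_i$, and $\sum_{i=1}^{2n}e_i$ respectively, this shows that the class of $\kappa_j$ consists precisely of the elements $q(v)$ with $v=\sum_{i=1}^{2n}x_ie_i$, where: all $x_i$ are even for $j=0$; the $x_i$ with $i$ odd are odd and the rest even for $j=1$; the $x_i$ with $i$ even are odd and the rest even for $j=2$; and all $x_i$ are odd for $j=3$. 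For any such $v$, since $\tensor a=q(v)=v^T\tensor Q$, the formula \eqref{eq:||^2} gives $|q(v)|^2=\tensor a\tensor Q^{-1}\tensor a^T=v^T\tensor Q v=Q(v,v)$, so by the displayed expression for $Q(X,X)$ in this case,
\[
|q(v)|^2=-\sum_{i=1}^{2n}\big(2a_ix_i^2+(x_i-x_{i+1})^2\big),
\]
with subscripts read mod $2n$.

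\textbf{Step 2 (term-by-term parity bound).} Next I would bound the two families of terms below. Whenever $x_i$ is odd, $x_i^2\ge 1$, so $2a_ix_i^2\ge 2a_i$; and $2a_ix_i^2\ge 0$ always since $a_i>0$. Whenever $x_i$ and $x_{i+1}$ have opposite parity, $x_i-x_{i+1}$ is odd, so $(x_i-x_{i+1})^2\ge 1$; and $(x_i-x_{i+1})^2\ge 0$ always. Now one does the bookkeeping case by case. For $j=0$ both families are merely $\ge 0$, so $|q(v)|^2\le 0=|\kappa_0|^2$. For $j=3$ every $x_i$ is odd, so $\sum 2a_ix_i^2\ge 2\sum_{i=1}^{2n}a_i$ while the edge terms are $\ge 0$, giving $|q(v)|^2\le-2\sum a_i=|\kappa_3|^2$. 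For $j=1$: the odd-indexed diagonal terms contribute $\ge 2a_i$; and because $2n$ is even, the cyclic index sequence alternates odd/even, so \emph{every} consecutive pair $(i,i+1)$ has one odd index (odd $x$) and one even index (even $x$), whence all $2n$ edge terms are $\ge 1$; thus $|q(v)|^2\le -2\sum_{i\text{ odd}}a_i-2n=|\kappa_1|^2$. The case $j=2$ is identical after swapping the roles of odd and even indices. In each case equality holds at the representative $\kappa_j$ itself, corresponding to $v=0$, $\sum_{i\text{ odd}}e_i$, $\sum_{i\text{ even}}e_i$, $\sum_{i=1}^{2n}e_i$, which finishes the proof.

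There is no genuine analytic obstacle: the estimate is term-by-term, and since all $a_i>0$ each lower bound is sharp at the stated representative. The two points that need care are (i) the identification of each class with a parity class of vectors $q(v)$, which uses nondegeneracy of $\tensor Q$, and (ii) the cyclic parity count for $j=1,2$, where one must invoke that $2n$ is even so that the $2n$--gon is consistently two-colored and no edge term can vanish — this is precisely the source of the $+n$ appearing in $|\kappa_1|^2$ and $|\kappa_2|^2$, and hence of the ``$n-2$'' in the final $\mathbb Z_2$--Thurston norms.
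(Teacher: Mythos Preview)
Your proposal is correct and follows essentially the same route as the paper: parametrize each equivalence class by the parity class of $v$ in $V$, use $|q(v)|^2=Q(v,v)=-\sum_i(2a_ix_i^2+(x_i-x_{i+1})^2)$, and observe that the displayed values of $|\kappa_j|^2$ are the evident term-by-term minima of this sum under the given parity constraints. The paper simply asserts the last step (``it is clear''), whereas you spell out the bookkeeping for each $j$ and make explicit the cyclic parity count responsible for the $2n$ edge contributions when $j=1,2$.
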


\begin{proof}
The characteristic elements equivalent to $k_j$ have the form
$$\kappa=q(\sum_{i=1}^{2n}x_ie_i), \;\; x_i\in \Z$$
where
\begin{itemize}
\item $x_i$ even for any $i$, when $j=0$
\item $x_i\equiv i \pmod2$, when $j=1$
\item $x_i\equiv i+1 \pmod2$, when $j=2$
\item $x_i$ odd for any $i$, when $j=3$
\end{itemize}
and
\begin{eqnarray*}
|\kappa|^2&=&(\tensor QX)^T\tensor Q^{-1}(\tensor QX)\\
&=&X^T\tensor QX\\
&=&-\sum_{i=1}^{2n}(2a_ix_i^2+(x_i-x_{i+1})^2).
\end{eqnarray*}
Since $a_i>0$, it is clear that each $|\kappa_i|^2$ is a maximizer for the given parity on $x_i$'s.
\end{proof}

Similar to the previous case, $\kappa_1-\kappa_0,\kappa_2-\kappa_0,\kappa_3-\kappa_0$ represent the three different homology classes of order 2 in $H^2(Y)$, and
$$h(\beta^{-1}\mathrm{PD}^{-1}(\kappa_1-\kappa_0))\geq 2(d(Y,[\kappa_0])-d(Y,[\kappa_1]))=\sum_{i\text{ odd}}a_{i}+n.$$
$$h(\beta^{-1}\mathrm{PD}^{-1}(\kappa_2-\kappa_0))\geq 2(d(Y,[\kappa_0])-d(Y,[\kappa_2]))=\sum_{i\text{ even}}a_{i}+n.$$
$$h(\beta^{-1}\mathrm{PD}^{-1}(\kappa_3-\kappa_0))\geq 2(d(Y,[\kappa_0])-d(Y,[\kappa_1]))=\sum_{i=1}^{2n}a_{i}.$$
By Lemma~\ref{lem:TautConn}, we get lower bounds to the $\Z_2$-Thurston norm.
The lower bounds here are also sharp - the lift of the disks bounded by each component of the link in $\Sigma(L)$ has negative Euler characteristic $$\sum_{i\text{ odd}}a_{i}+n-2,\quad \sum_{i\text{ even}}a_{i}+n-2,\quad \sum_{i=1}^{2n}a_{i}-2,$$ respectively.


\subsection{Proof of Proposition~\ref{prop:Braid3}}

The quadratic form $Q$ is represented by the symmetric matrix
$$\tensor Q=
      \begin{pmatrix}
        -2a-3 & 1 & 1 \\
        1 & -2b-3 & 1 \\
        1 & 1 & -2c-3 \\
      \end{pmatrix}
$$

The set of characteristic elements
$$\mathcal{C}=\{a_1\alpha_1+a_2\alpha_2+a_3\alpha_3\,| \, a_1\equiv a_2 \equiv a_3 \equiv 1 \pmod 2 \},$$ and $$ q(e_1)=(-2a-3,1,1), \; q(e_2)=(1,-2b-3,1), \; q(e_3)=(1,1, -2c-3).         $$

Let $$\kappa_0=(1,-1,1), \; \kappa_1=\kappa_0+q(e_1-e_2)=(-2a-3,2b+3,1)$$
be characteristic elements in the affine space $\mathcal{C}_1(V)$.  We claim:

\begin{lem}
Each of the characteristic elements $\kappa_i$ maximizes the quadratic form within its equivalent class of characteristic elements, i.e.,
$$|\kappa_i+2q(v)|^2\leq |\kappa_i|^2$$ for any $v\in V$, $i=0,1$.
\end{lem}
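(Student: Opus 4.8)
The plan is to turn the statement into an elementary inequality for an integral quadratic form, exactly as in the two lemmas already proved for Proposition~\ref{prop:Braid2n}. Write $\tensor Q$ for the symmetric matrix displayed above, so that for $v$ with coordinate vector $X=(x_1,x_2,x_3)^T$ in the basis $\{e_1,e_2,e_3\}$ one has $q(v)=\tensor Q X$. Using $|\alpha|^2=\tensor a\,\tensor Q^{-1}\tensor a^T$ and expanding, the cross terms involving $\tensor Q\tensor Q^{-1}$ collapse, and we get, for $i=0,1$ and every $v\in V$,
\[
|\kappa_i+2q(v)|^2-|\kappa_i|^2 \;=\; 4\bigl(\kappa_i(X)+X^T\tensor Q X\bigr),
\]
where $\kappa_i(X)=\tensor a_i\cdot X$ is the evaluation pairing. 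So the lemma is equivalent to the assertion that $\phi_i(X):=\kappa_i(X)+X^T\tensor Q X\le 0$ for all $X\in\Z^3$. It is convenient to also record that
\[
X^T\tensor Q X \;=\; -\sum_{j=1}^3 2a_jx_j^2\;-\;\sum_{j=1}^3 x_j^2\;-\sum_{1\le j<k\le 3}(x_j-x_k)^2,\qquad (a_1,a_2,a_3)=(a,b,c),
\]
which follows from $2\sum_{j<k}x_jx_k-2\sum_j x_j^2=-\sum_{j<k}(x_j-x_k)^2$.

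For $i=0$, where $\kappa_0=(1,-1,1)$, absorbing the linear term into the diagonal squares rewrites $\phi_0$ as $-x_1(x_1-1)-x_2(x_2+1)-x_3(x_3-1)-\sum_j 2a_jx_j^2-\sum_{j<k}(x_j-x_k)^2$, a sum of manifestly nonpositive terms (the first three are products of consecutive integers, and $a,b,c\ge0$); hence $\phi_0\le0$. For $i=1$, where $\kappa_1=(-2a-3,2b+3,1)$, the same variable-by-variable grouping together with $x_1^2+3x_1=(x_1+1)(x_1+2)-2$ and $x_2^2-3x_2=(x_2-1)(x_2-2)-2$ leaves
\[
\phi_1(X)\;\le\; 4-(x_1+1)(x_1+2)-(x_2-1)(x_2-2)-(x_1-x_2)^2-(x_1-x_3)^2-(x_2-x_3)^2,
\]
the discarded terms $-2ax_1(x_1+1)$, $-2bx_2(x_2-1)$, $-2cx_3^2$, $-x_3(x_3-1)$ all being $\le0$. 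Thus the whole matter reduces to the purely numerical inequality $(\star)$: for all $x_1,x_2,x_3\in\Z$,
\[
(x_1+1)(x_1+2)+(x_2-1)(x_2-2)+(x_1-x_2)^2+(x_1-x_3)^2+(x_2-x_3)^2\;\ge\;4.
\]

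To prove $(\star)$ I argue by cases. Both $(x_1+1)(x_1+2)$ and $(x_2-1)(x_2-2)$ are nonnegative, and each is $\ge2$ unless $x_1\in\{-1,-2\}$ (resp.\ $x_2\in\{1,2\}$); if both are $\ge2$ we are done. Using the symmetry $(x_1,x_2,x_3)\mapsto(-x_2,-x_1,-x_3)$, which fixes the left side of $(\star)$ and swaps the two linear-factor terms, we may otherwise assume $(x_1+1)(x_1+2)=0$, i.e.\ $x_1\in\{-1,-2\}$. If $x_2\in\{1,2\}$ then $(x_1-x_2)^2\ge4$; if $x_2\notin\{1,2\}$ then the sum of the three pairwise squares vanishes only when $x_1=x_2=x_3$, in which case $x_2\in\{-1,-2\}$ forces $(x_2-1)(x_2-2)\ge6$, while otherwise that sum is $\ge2$---in every case $(\star)$ holds.

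The main obstacle is the case $i=1$: in contrast to $i=0$, the regrouping does not terminate in a sum of nonpositive terms but leaves a surplus $+4$ that must be absorbed by the off-diagonal squares, so a genuine (if elementary) case check as in $(\star)$ cannot be avoided. Once the lemma is in place, $d(V,[\kappa_0])$ and $d(V,[\kappa_1])$ are read off from~(\ref{eq:Corr}) exactly as in the previous proofs, Lemma~\ref{lem:TautConn} converts the resulting genus bounds into $\Z_2$--Thurston norm bounds, and the remaining two of the three nonzero classes of $H_2(\Sigma(L);\Z_2)$ are obtained from the cyclic symmetry $(a,b,c)\mapsto(b,c,a)$ of $\tensor Q$.
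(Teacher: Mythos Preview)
Your argument is correct. The key identity $|\kappa_i+2q(v)|^2-|\kappa_i|^2=4\bigl(\kappa_i(X)+X^T\tensor QX\bigr)$ is right, your decomposition of $X^T\tensor QX$ is accurate, and the case analysis for $(\star)$ checks out (in particular, the observation that $(x_1-x_2)^2+(x_1-x_3)^2+(x_2-x_3)^2=2\bigl((x_1-x_2)^2+(x_1-x_2)(x_2-x_3)+(x_2-x_3)^2\bigr)$ is always even, hence $\ge2$ when nonzero, is exactly what makes the last subcase go through).

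The paper takes a slightly different route that largely sidesteps the case check you needed for $(\star)$. Rather than parametrizing each equivalence class separately as $\kappa_i+2q(v)$ with $v\in V$ unconstrained, it parametrizes \emph{both} classes uniformly as $\kappa=\kappa_0+q(X)$ and encodes the class by parity constraints on the $x_j$ (all even for $i=0$; $x_1,x_2$ odd and $x_3$ even for $i=1$). This yields a single closed formula
\[
|\kappa|^2=|\kappa_0|^2-\Bigl(2ax_1^2+2bx_2^2+2cx_3^2+\sum_j(x_j-x_{j+1})^2+(x_1-1)^2+(x_2+1)^2+(x_3-1)^2-3\Bigr),
\]
and the parity hypotheses then force most of the individual squares to be at least $1$ or $4$, so the minimization in each class is almost immediate. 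What the paper buys is economy: one formula, and the ``surplus $+4$'' you had to absorb in $(\star)$ is automatically handled by parity (e.g.\ $x_2-x_3$ and $x_3-x_1$ are odd, so their squares are each $\ge1$). What your approach buys is that no parity bookkeeping is needed and the inequality $\phi_i\le0$ is posed over all of $\Z^3$; the price is the explicit verification of $(\star)$, which you carried out correctly.
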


\begin{proof}
The characteristic elements equivalent to $\kappa_i$ have the form
$$\kappa=\kappa_0+q(x_1e_1+x_2e_2+x_3e_3), \;\; x_i\in \Z$$
where
\begin{itemize}
\item $x_1,x_2,x_3$ even, when $i=0$
\item $x_1, x_2$ odd, $x_3$ even, when $i=1$
\end{itemize}
and
\begin{eqnarray*}
|\kappa|^2 &=&|\kappa_0|^2-(2a+3)x_1^2-(2b+3)x_2^2-(2c+3)x_3^2\\
&&\, +2x_1x_2+2x_1x_3+2x_2x_3+2x_1-2x_2+2x_3\\
&=&|\kappa_0|^2-\Big(2ax_1^2+2bx_2^2+2cx_3^2+\sum_i(x_i-x_{i+1})^2\\
&&\qquad\qquad +(x_1-1)^2+(x_2+1)^2+(x_3-1)^2-3\Big).
\end{eqnarray*}
Since $a,b,c \geq 0$, it is clear that each $|\kappa_j|^2$ is a maximizer for the given parity on $x_i$'s.
\end{proof}

Note that $\kappa_1-\kappa_0$ represents one of the homology classes of order 2 in $H^2(Y)$.  Hence,
$$h(\beta^{-1}\mathrm{PD}^{-1}(\kappa_1-\kappa_0))\geq 2(d(Y,[\kappa_0])-d(Y,[\kappa_1]))=a+b+2.$$
Similar arguments apply to the other two homology classes of order 2, we get lower bounds to $h$ given by $b+c+2$ and $c+a+2$.
By Lemma~\ref{lem:TautConn}, we get lower bounds to the $\Z_2$-Thurston norms given by
$$a+b,\quad b+c,\quad c+a.$$
Meanwhile, the lower bounds are sharp - the lift of one of the disks bounded by a component of the link in $\Sigma(L)$ has the desired Thurston norm.


\section{Genus-one open books and layered-triangulations}\label{sect:G1OB}

In this section, we will construct layered-triangulations for manifolds admitting a genus one open book decomposition with connected binding. Our construction gives a lower bound to the complexity of the $3$--manifolds considered in the last section. The concept of layered-triangulations was introduced by Jaco and Rubinstein \cite{JR}. We refer the reader to their original paper for more details on layered-triangulations.

Suppose that $M$ is a compact $3$--manifold with boundary, and $\mathscr T$ is a triangulation of $M$. Then $\mathscr T|_{\partial M}$ is a (2-dimensional) triangulation of $\partial M$.
If $e$ is one edge of $\mathscr T|_{\partial M}$, then we can add a tetrahedron $\tilde{\Delta}$ to $\mathscr T$ along $e$. The new space $M'$ we get is still homeomorphic to $M$, but there is a new triangulation $\mathscr T'=\mathscr T\cup\Delta$, where $\Delta$ is the image of $\tilde{\Delta}$, and two faces of $\Delta$ are identified with the two triangles adjacent to $e$ in $\mathscr T|_{\partial M}$. This process is called ``layering a tetrahedron along an edge''.
As in Figure~\ref{fig:Flip}, $\mathscr T'|_{\partial M'}$ differs from $\mathscr T|_{\partial M}$ by ``flipping the diagonal $e$''.

\begin{figure}[ht]
\begin{picture}(350,130)
\put(90,0){\scalebox{0.35}{\includegraphics*{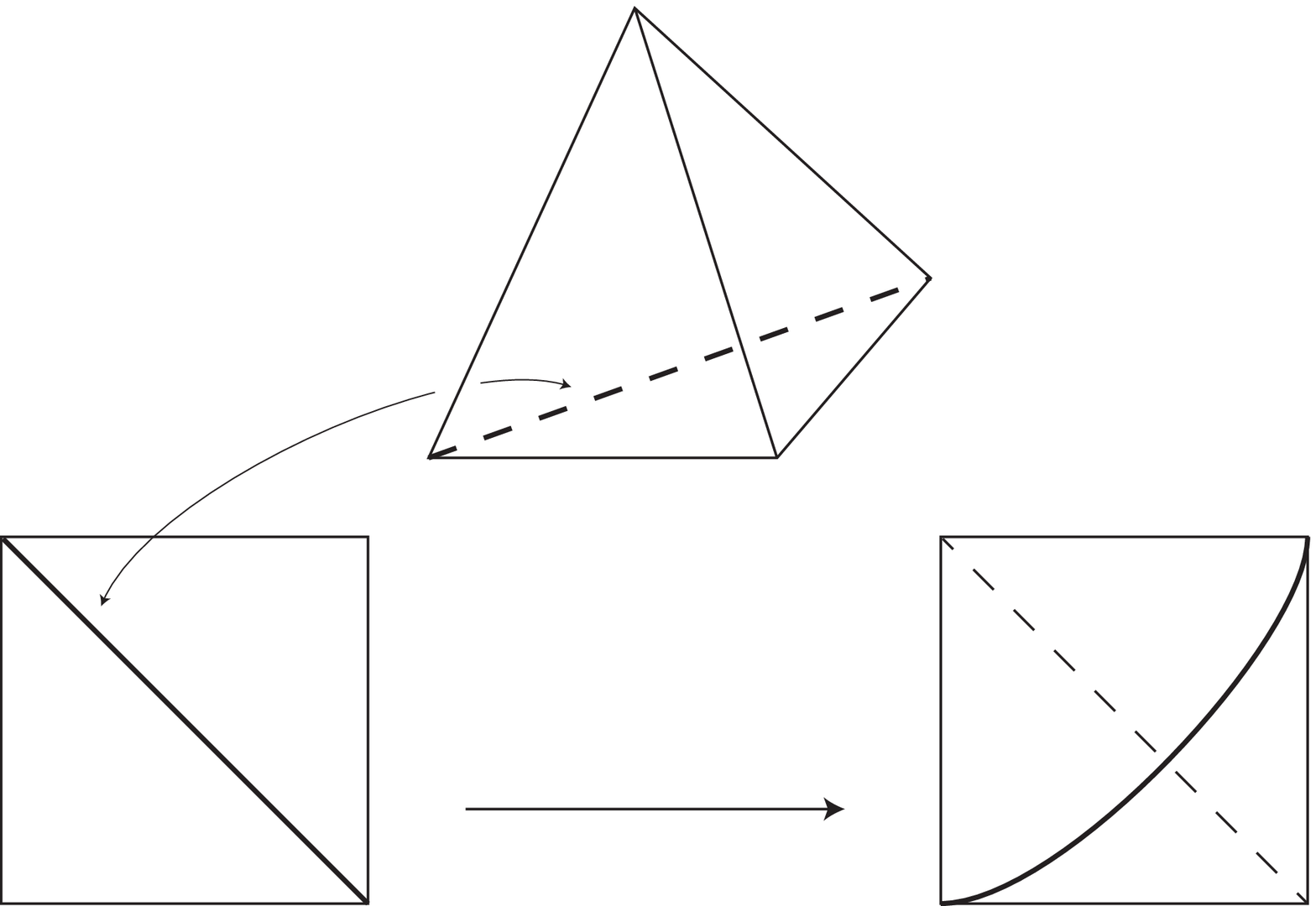}}}

\put(110,20){$e$}

\put(60,20){$\mathscr T|_{\partial M}$}

\put(150,100){$\tilde{\Delta}$}

\end{picture}
\caption{Layering a tetrahedron along an edge results in a diagonal flip in the triangulation of the boundary.}
\label{fig:Flip}
\end{figure}

The following lemma is obvious.

\begin{lem}\label{lem:UniqTr}
Let $\mathbb T_1$ be the genus $1$ compact oriented surface with one boundary component. Then $\mathbb T_1$ has exact one one-vertex triangulation $\mathscr R$ up to homeomorphism.
\end{lem}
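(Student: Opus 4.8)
The plan is to pin down the combinatorics of $\mathscr R$ by an Euler--characteristic count followed by a short finite case analysis, and then to observe that the resulting combinatorial type is unique up to homeomorphism. First I would record the numerics. Since $\chi(\mathbb T_1)=-1$ and $\mathscr R$ has a single vertex $v$, writing $E$ and $F$ for its numbers of edges and triangles gives $F-E=-2$. The boundary circle $\partial\mathbb T_1$ is a subcomplex whose vertex set is a nonempty subset of $\{v\}$, so it is a single loop edge; hence exactly one edge lies on $\partial\mathbb T_1$ and the remaining edges are interior, each incident to exactly two triangle--corners. Counting triangle--corners in two ways gives $3F=2(E-1)+1$, so $F=3$ and $E=5$: there are four interior loop edges $x_1,\dots,x_4$ at $v$, one boundary loop edge $\partial$, and $9$ triangle--corners in all.

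Next I would enumerate how the five edges can label the three triangles. Exactly one triangle, say $T_1$, contains $\partial$ (occupying one of its corners), while the other eight corners must be filled by $x_1,x_2,x_3,x_4$ with each $x_i$ used exactly twice. Two structural facts cut this down: (i) if some triangle has two of its edges identified to the same $x_i$, that triangle collapses onto a disk meeting the rest of $\mathbb T_1$ only along one edge or only along $v$, which together with an Euler--characteristic comparison of the resulting pieces contradicts that $\mathbb T_1$ is a connected once--punctured torus; and (ii) for each of the remaining configurations one simply computes $\chi$ of the pieces $T_1$, $T_2\cup T_3$, etc., and of the reassembled surface, which rules out everything except $T_1=(\partial,x_1,x_2)$, $T_2=(x_1,x_3,x_4)$, $T_3=(x_2,x_3,x_4)$, with $x_1$ gluing $T_1$ to $T_2$, $x_2$ gluing $T_1$ to $T_3$, and $x_3,x_4$ each gluing $T_2$ to $T_3$. (In dual language, the dual graph is forced to be the triangle on $\{T_1,T_2,T_3\}$ with the $T_2T_3$ edge doubled and a free half--edge at $T_1$.)

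Finally I would check that this skeleton determines $\mathscr R$ up to homeomorphism. Once the triangle--to--edge incidences are fixed, the gluing map along each interior loop edge is determined up to the orientation--reversing condition forced by orientability of $\mathbb T_1$, and there is no remaining combinatorial freedom; reassembling $T_1,T_2,T_3$ as above does produce the oriented once--punctured torus (for instance $T_2\cup_{x_3,x_4}T_3$ is already a once--punctured torus, and gluing the triangle $T_1$ along the boundary arc $x_1\cup x_2$ does not change the homeomorphism type). Since a combinatorial isomorphism of triangulations of a surface is always realized by a homeomorphism, this yields the asserted uniqueness, and the explicit model above exhibits $\mathscr R$.

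The main obstacle is purely organizational: keeping the case analysis in the second step genuinely exhaustive and clean, in particular treating the degenerate configurations (a triangle with two self--identified edges, or two distinct edges joining the same pair of triangles) and the orientation bookkeeping for the interior gluings, rather than any single hard idea. Everything reduces to Euler--characteristic comparisons together with connectedness and the manifold condition once the cases are laid out.
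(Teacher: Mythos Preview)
The paper offers no proof of this lemma at all: it simply declares the statement ``obvious'' and moves on to describe the triangulation $\mathscr R$ via Figure~\ref{fig:Pentagon}. So there is nothing in the paper to compare your argument against, and your proposal is in fact supplying a proof where the paper gives none.

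Your approach is sound. The Euler--characteristic count ($F=3$, $E=5$, one boundary loop and four interior loops) is correct, and your reduction of the edge--labelling to the single pattern $T_1=\{\partial,x_1,x_2\}$, $T_2=\{x_1,x_3,x_4\}$, $T_3=\{x_2,x_3,x_4\}$ is right: once repeated labels on a triangle are excluded, a short pigeonhole shows that $T_1$'s two interior labels must be distinct from each other and each must recur in a different one of $T_2,T_3$, forcing the pattern. The only places to be careful are exactly the two you flag. For (i), you should note that a triangle with two sides carrying the same interior label either yields a M\"obius band (contradicting orientability) or a folded disk whose boundary is the third edge; in the latter case, capping that third edge changes $\chi$ by $+1$ and the remaining two triangles cannot then assemble to a surface with $\chi=-1$ and a single boundary loop, so this case is indeed excluded. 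For the final step, rather than asserting ``no remaining combinatorial freedom,'' it is cleanest to say: once the edge--labels are fixed, choose an orientation on $T_1$; consistency of orientation across $x_1$ and $x_2$ then determines the orientations of $T_2$ and $T_3$, and the gluings along $x_3,x_4$ are then forced; finally check that the link of the single vertex $v$ is a single arc (equivalently, that the boundary word at $v$ has length $9$ and is connected), which confirms that the result is a manifold with one boundary circle and $\chi=-1$, hence $\mathbb T_1$. With these small clarifications your proposal is a complete proof.
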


The triangulation $\mathscr R$ has five edges, labeled as $a_1,a_2,b_1,b_2,c$, as in Figure~\ref{fig:Pentagon}. Here $c$ is the unique edge on $\partial \mathbb T_1$, and $a_1,a_2,b_1,b_2$ are called the {\it interior edges}.

Let $\mathbb S=\mathbb T_1\times[0,1]/\sim$, where the equivalence relation $\sim$ is given by $(x,t_1)\sim(x,t_2)$ for any $x\in\partial\mathbb T_1$ and $t_1,t_2\in[0,1]$. Then $\mathbb S$ is homeomorphic to a genus $2$ handlebody, and is homotopy equivalent to $\mathbb T_1$.

From now on, whenever we consider a triangulation of $\mathbb S$, this triangulation has only one vertex, and
one edge of the triangulation is on $\partial\mathbb T_1\times\{0\}$.

We start with the triangulation $\mathscr R$ of $\mathbb T_1$, regarded as a degenerate triangulation of $\mathbb S$. Suppose $\mathscr T$ is a triangulation of $\mathbb S$ we have obtained, then we can layer a tetrahedron along an interior edge of $\mathscr T|_{\mathbb T_1\times\{1\}}$ to get a new triangulation. The triangulations obtained in this way are called {\it layered-triangulations}.

Suppose that $\mathscr T$ is a layered-triangulation  of $\mathbb S$. By Lemma~\ref{lem:UniqTr}, $\mathscr T|_{\mathbb T_1\times\{0\}}$ is homeomorphic to $\mathscr T|_{\mathbb T_1\times\{1\}}$, so we can glue them together by a homeomorphism
$$f_{\mathscr T}\co \mathbb T_1\times\{1\}\to \mathbb T_1\times\{0\}$$
induced by a simplicial isomorphism.
The resulting manifold $M(\mathscr T)$ has a genus $1$ open book decomposition with connected binding, and the resulting triangulation is also called a {\it layered-triangulation}.
A well-known result of Birman--Hilden \cite{BH} implies that
$M(\mathscr T)$ is the double branched cover of $S^3$ over a closed $3$--braid, and the map $f_{\mathscr T}$ doubly covers the map on the disk with $3$ marked points corresponding to the $3$--braid.
We want to find out what the closed $3$--braid is in terms of $\mathscr T$.

\begin{figure}[ht]
\begin{picture}(350,180)
\put(30,0){\scalebox{0.35}{\includegraphics*{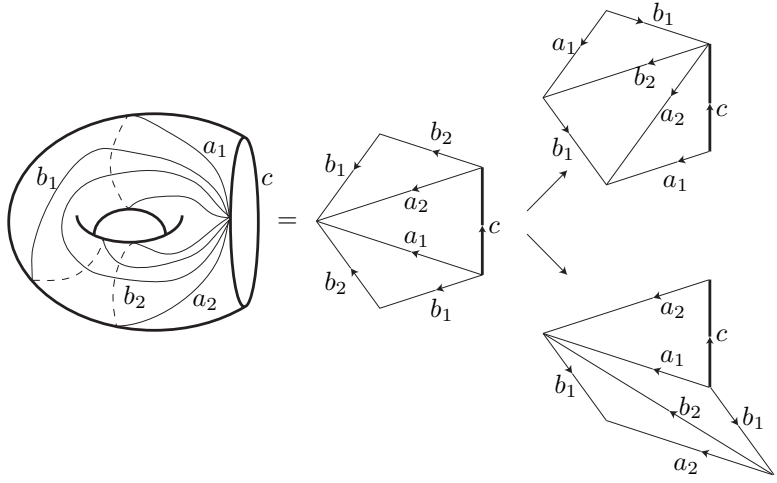}}}


\put(104,122){$a_1$}

\put(100,64){$a_2$}

 \put(41,110){$b_1$}

\put(74,65){$b_2$}

\put(126,110){$c$}

\put(180,88){$a_1$}

\put(180,101){$a_2$}

\put(190,60){$b_1$} \put(150,115){$b_1$}

\put(150,70){$b_2$} \put(190,127){$b_2$}

\put(212,92){$c$}

\put(132,94){$=$}

\put(277,134){$a_2$}

\put(267,148){$b_2$}

\put(278,109){$a_1$} \put(236,161){$a_1$}

\put(236,121){$b_1$} \put(274,172){$b_1$}

\put(298,136){$c$}

\put(277,43){$a_1$}

\put(277,61){$a_2$} \put(282,2){$a_2$}

\put(237,31){$b_1$} \put(308,19){$b_1$}

\put(284,23){$b_2$}

\put(298,51){$c$}

\end{picture}
\caption{The triangulation $\mathscr R$ and two diagonal flips of $a_1$ and $b_1$.}
\label{fig:Pentagon}
\end{figure}

\begin{lem}\label{lem:FlipMap}
Let $a_1',b_1'$ be simple closed curves in the interior of $\mathbb T_1$ which are freely isotopic to $a_1,b_1$, respectively.
Let $\tau_a,\tau_b$ be the left Dehn twists about $a'_1,b'_1$ on $\mathbb T_1$, respectively. Let $f=f_{\mathscr T}$. After layering a tetrahedron along an interior edge of $\mathscr T|_{\mathbb T_1\times\{1\}}$, we get a new triangulation $\mathscr T'$. Then the isotopy class of $f_{\mathscr T'}$ is given in the following table:
\begin{center}
\begin{tabular}{|c|c|}
\hline
\text{flipped edge} &$f_{\mathscr T'}$\\
\hline
$a_1$ &$f\tau_b^{-1}\tau_a^{-1}$\\
\hline
$a_2$ &$f\tau_a\tau_b$\\
\hline
$b_1$ &$f\tau_a^{-1}$\\
\hline
$b_2$ &$f\tau_a$\\
\hline
\end{tabular}
\end{center}
\end{lem}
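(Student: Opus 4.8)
The plan is to track explicitly how a diagonal flip in the boundary triangulation $\mathscr T|_{\mathbb T_1\times\{1\}}$ changes the gluing homeomorphism $f_{\mathscr T}$. The key observation is that layering a tetrahedron along an edge $e$ of a genus-one surface triangulation replaces $e$ by the "opposite" diagonal of the quadrilateral formed by the two triangles adjacent to $e$, and by Lemma~\ref{lem:UniqTr} the resulting triangulation is again simplicially isomorphic to $\mathscr R$. Composing this isomorphism back to $\mathscr R$ is precisely the data of an orientation-preserving mapping class of $\mathbb T_1$; I must identify which mapping class arises for each of the four interior edges $a_1,a_2,b_1,b_2$.

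First I would set up coordinates: realize $\mathbb T_1$ as the pentagon of Figure~\ref{fig:Pentagon} with the five edges $a_1,a_2,b_1,b_2,c$ and note that $\{a_1,b_1\}$ (say) forms a pair of interior edges meeting the single vertex, so that cutting along them gives a disk. The homology/intersection pattern of $\mathscr R$ identifies $a_1$ with a curve freely isotopic to $a_1'$ and $b_1$ with one freely isotopic to $b_1'$. Then I would carry out the flip of $b_1$ as pictured on the lower right of Figure~\ref{fig:Pentagon}: the new edge cuts across the quadrilateral bounded by the two triangles on either side of $b_1$, and the simplicial isomorphism taking the flipped triangulation back to $\mathscr R$ is recognized as (a power of) the Dehn twist about $b_1'$. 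A careful bookkeeping of orientations — using the convention that we layer on $\mathbb T_1\times\{1\}$ and glue via $f_{\mathscr T}\co\mathbb T_1\times\{1\}\to\mathbb T_1\times\{0\}$ — shows the correction is $\tau_a^{-1}$ for flipping $b_1$ and $\tau_a$ for flipping $b_2$; the relation $f_{\mathscr T'}=f_{\mathscr T}\circ(\text{that mapping class})$ holds because the new gluing factors through the old one followed by the isomorphism on the new top surface. The rows for $a_1$ and $a_2$ follow by the same analysis, except that flipping an $a$-edge is a "long" flip whose effect on $\mathbb T_1$ is the composite twist $\tau_b^{\mp1}\tau_a^{\mp1}$; this can either be verified directly from the picture or deduced from the $b$-cases together with the known action of the mapping class group of $\mathbb T_1$ (generated by $\tau_a,\tau_b$) and the symmetry of $\mathscr R$ exchanging the roles of the $a$'s and $b$'s.

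The main obstacle is pinning down the signs and the order of the twists: a priori each flip could contribute $\tau^{+1}$ or $\tau^{-1}$, and for the $a$-edges the answer is a product of two twists in a particular order, so one must be scrupulous about (i) the chosen orientation of $\mathbb T_1$ and hence of "left" Dehn twist, (ii) whether the simplicial isomorphism is read on the top or bottom copy, and (iii) the direction of $f_{\mathscr T}$. I would fix these conventions once at the outset, verify the $b_1$ entry by an explicit isotopy of the pentagon, and then get the remaining three entries by combining that computation with the evident symmetries (the involution of $\mathscr R$ swapping $a_i\leftrightarrow b_i$ and the one swapping subscripts $1\leftrightarrow 2$, each of which conjugates the relevant twists appropriately). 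Since the lemma is flagged as essentially a diagram-chase, I expect no deep input beyond this careful orientation accounting; the substance is entirely in Figure~\ref{fig:Pentagon} and the uniqueness statement of Lemma~\ref{lem:UniqTr}.
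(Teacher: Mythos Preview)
Your overall plan---compute the simplicial isomorphism induced by a flip and recognize it as a mapping class---matches the paper exactly, and your treatment of the $b_1$-flip (identify the new edges with $\tau_a$-images of the old ones, conclude $f_{\mathscr T'}=f_{\mathscr T}\tau_a^{-1}$) is precisely what the paper does.

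One point deserves correction. Your suggested shortcut for the $a$-rows, namely the ``involution of $\mathscr R$ swapping $a_i\leftrightarrow b_i$,'' does not exist in a form that would do the job: the table itself is not symmetric under $a\leftrightarrow b$ (flipping $b_1$ gives a single twist $\tau_a^{-1}$, while flipping $a_1$ gives the product $\tau_b^{-1}\tau_a^{-1}$). In the pentagon model of $\mathscr R$ the $a$-edges and $b$-edges sit differently relative to the boundary edge $c$, so there is no simplicial automorphism exchanging them and conjugating $\tau_a$ to $\tau_b$. The paper handles this by doing a second direct edge-by-edge computation for the $a_1$-flip (reading off $a_i^{(2)},b_i^{(2)}$ from the top-right picture and checking they equal $\tau_a\tau_b$ of $a_i^{(1)},b_i^{(1)}$). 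Your ``$1\leftrightarrow 2$'' symmetry, on the other hand, is exactly what the paper uses: flipping $b_1$ creates $b_2$ and vice versa, so the two operations are literal inverses, giving the $b_2$ and $a_2$ rows from the $b_1$ and $a_1$ rows. Since you already allow for direct verification of the $a_1$ case ``from the picture,'' your plan goes through once you drop the $a\leftrightarrow b$ symmetry claim.
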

\begin{proof}
For our convenience, we orient the edges $a_i,b_i$ as in Figure~\ref{fig:Pentagon}.
Homotopically, the Dehn twist $\tau_a$ fixes $a_1,a_2$, sends $b_1$ to $\bar b_2$, and sends $b_2$ to a loop freely homotopic to $b_2*a_1$;
the Dehn twist $\tau_b$ fixes $b_1$, and sends $x$ to $x*b_1$ where $x\in\{a_1,a_2,b_2\}$.

Let $a_i^{(k)},b_i^{(k)}$ be the interior edges of $\mathscr T|_{\mathbb T_1\times\{k\}}$, $i=1,2$, $k=0,1$. After layering a new tetrahedron, the interior edges of $\mathscr T'|_{\mathbb T_1\times\{1\}}$ are denoted $a_i^{(2)},b_i^{(2)}$.

We first consider layering a tetrahedra along $b_1$. The central picture in Figure~\ref{fig:Pentagon} shows the labeling of $a_i^{(1)},b_i^{(1)}$, and the bottom right picture shows the labeling of $a_i^{(2)},b_i^{(2)}$. For simplicity, we suppress the superscripts in the figure. By definition, $f_{\mathscr T}$ maps $a_i^{(1)},b_i^{(1)}$ to $a_i^{(0)},b_i^{(0)}$, and $f_{\mathscr T'}$ maps $a_i^{(2)},b_i^{(2)}$ to $a_i^{(0)},b_i^{(0)}$. From Figure~\ref{fig:Pentagon}, we see that $a_i^{(2)}$ is isotopic to $a_i^{(1)}=\tau_a(a_i^{(1)})$, $b_1^{(2)}$ is isotopic to $\bar b_2^{(1)}=\tau_a(b_1^{(1)})$, and $b_2^{(2)}$ is isotopic to $b_2^{(1)}*a_1^{(1)}=\tau_a(b_2^{(1)})$.
Hence $$f_{\mathscr T'}=f_{\mathscr T}\tau_a^{-1}$$ up to isotopy.

If we flip $b_1$, the new diagonal we create is $b_2$. Hence if we flip $b_2$, we will create $b_1$. Suppose we start with a triangulation $\mathscr T$, layer a tetrahedra along $b_1$, then layer a tetrahedra along $b_2$. The two new tetrahedra have exactly two common faces. As observed in \cite{JR}, we can crush the two tetrahedra to get a triangulation with two less tetrahedra, which is exactly the initial triangulation $\mathscr T$. In this sense, layering a tetrahedra along $b_2$ is the inverse operation of layering a tetrahedra along $b_1$.
As a result, $$f_{\mathscr T'}=f_{\mathscr T}\tau_a$$ if $\mathscr T'$ is obtained by layering a tetrahedra along $b_2$.

Now we consider
 layering a tetrahedra along $a_1$, in which case the top right picture in Figure~\ref{fig:Pentagon} shows the labeling of $a_i^{(2)},b_i^{(2)}$. We can see that
$$a_1^{(2)}\sim b_1^{(1)}=\tau_a\tau_b(a_1^{(1)}), \quad a_2^{(2)}\sim a_2^{(1)}*\bar b_2^{(1)}=\tau_a\tau_b(a_2^{(1)}),$$
$$b_1^{(2)}\sim\bar b_2^{(1)}=\tau_a\tau_b(b_1^{(1)}), \quad b_2^{(2)}\sim a_2^{(1)}=\tau_a\tau_b(b_2^{(1)}).$$
So we can conclude that
$$f_{\mathscr T'}=f_{\mathscr T}(\tau_a\tau_b)^{-1}.$$

As before, layering a tetrahedra along $a_2$ is the inverse operation of layering a tetrahedra along $a_1$. So $$f_{\mathscr T'}=f_{\mathscr T}\tau_a\tau_b$$
 if $\mathscr T'$ is obtained by  layering a tetrahedra along $a_2$.
 \end{proof}

\begin{thm}\label{thm:OBgenus1}
Suppose that $M$ is the double branched cover of $S^3$ over the closure of a $3$--braid $\sigma$, and the word length of $\sigma$ is $l$ with respect to the generator set $\{\sigma_2^{\pm1},(\sigma_2\sigma_1)^{\pm1}\}$. Then $M$ has a one-vertex triangulation with $l$ tetrahedra.
\end{thm}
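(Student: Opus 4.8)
The plan is to translate each elementary layering move into right multiplication of the open book monodromy by one of the four elements of $\{\sigma_2^{\pm1},(\sigma_2\sigma_1)^{\pm1}\}$, and then to realize $M$ by transcribing a minimal-length word for $\sigma$ into a sequence of layerings. Beyond what is already assembled above, the only ingredient needed is the standard Birman--Hilden dictionary relating Dehn twists on $\mathbb T_1$ to half-twists of the disk with three marked points.

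First I would fix that dictionary. The hyperelliptic involution realizes $\mathbb T_1$ as the double branched cover of $D^2$ over three marked points, and the Birman--Hilden theorem provides an isomorphism $\Phi\colon\mathrm{MCG}(\mathbb T_1,\partial)\to B_3$ sending a Dehn twist about a simple closed curve that covers an arc joining two consecutive marked points to the corresponding standard half-twist; for this $\Phi$ one has $M(\mathscr T)=\Sigma(\widehat{\Phi(f_{\mathscr T})})$, which is the Birman--Hilden statement invoked in the construction above. The curves $a_1'$ and $b_1'$ of Lemma~\ref{lem:FlipMap} are simple closed curves meeting once (the interior edges $a_1,b_1$ of $\mathscr R$ in Figure~\ref{fig:Pentagon}), so $\tau_a$ and $\tau_b$ generate $\mathrm{MCG}(\mathbb T_1,\partial)$, and since any two ordered pairs of simple closed curves in $\mathbb T_1$ meeting once are interchanged by a homeomorphism of $\mathbb T_1$, we may choose $\Phi$ so that $\Phi(\tau_a)=\sigma_2$ and $\Phi(\tau_b)=\sigma_1$. (The opposite labeling would only replace $\{\sigma_2^{\pm1},(\sigma_2\sigma_1)^{\pm1}\}$ by $\{\sigma_1^{\pm1},(\sigma_1\sigma_2)^{\pm1}\}$; the flip automorphism $\sigma_i\mapsto\sigma_{3-i}$ of $B_3$ interchanges the two, preserves word lengths, and leaves the isotopy class of the braid closure unchanged, so the choice is immaterial.) Since $\Phi$ is a homomorphism, the table of Lemma~\ref{lem:FlipMap} now says that a layering along $b_2$, $b_1$, $a_2$, or $a_1$ multiplies $\Phi(f_{\mathscr T})$ on the right by $\sigma_2$, $\sigma_2^{-1}$, $\sigma_2\sigma_1$, or $(\sigma_2\sigma_1)^{-1}$, respectively --- exactly the four elements of the generating set.

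With this in hand the construction is routine. Start from the degenerate triangulation $\mathscr R$ of $\mathbb S$, for which $f_{\mathscr R}=\id$ and hence $\Phi(f_{\mathscr R})$ is the trivial braid. Fix a word $\sigma=g_1g_2\cdots g_l$ of minimal length $l$ with every $g_i\in\{\sigma_2^{\pm1},(\sigma_2\sigma_1)^{\pm1}\}$, and build $\mathscr T$ by performing, for $i=1,\dots,l$ in order, the layering along the interior edge of the current top surface $\mathscr T^{(i-1)}|_{\mathbb T_1\times\{1\}}$ dictated by $g_i$; this is always possible because, by Lemma~\ref{lem:UniqTr}, that surface is again a copy of $\mathscr R$ with all four interior edges $a_1,a_2,b_1,b_2$ present. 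The resulting layered triangulation $\mathscr T$ has exactly $l$ tetrahedra, and it has a single vertex, since layering a tetrahedron along a boundary edge of a one-vertex triangulation, as well as the final gluing of the two one-vertex triangulated copies of $\mathscr R$ by a simplicial isomorphism, introduce no new vertices. By the preceding paragraph $\Phi(f_{\mathscr T})=g_1g_2\cdots g_l=\sigma$, so $M(\mathscr T)=\Sigma(\widehat\sigma)=M$, and $\mathscr T$ is the desired one-vertex triangulation of $M$ with $l$ tetrahedra. (When $l=0$, $M$ is the connected sum of two copies of $S^1\times S^2$, obtained by gluing up $\mathscr R$ directly; the substantive case is $l\ge1$.)

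I expect the only genuinely delicate point to be the dictionary of the second paragraph --- identifying the two Dehn twists of Lemma~\ref{lem:FlipMap} with the standard generators of $B_3$ through Birman--Hilden and pinning down the labeling so that the four layering moves match the four elements of $\{\sigma_2^{\pm1},(\sigma_2\sigma_1)^{\pm1}\}$. Everything afterward is bookkeeping --- rewriting a table and transcribing a group word --- with no estimates to make and no moduli to control.
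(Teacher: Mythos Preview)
Your argument is correct and follows the same route as the paper: set up the Birman--Hilden identification so that $\tau_a\leftrightarrow\sigma_2$ and $\tau_b\leftrightarrow\sigma_1$, read off from the table of Lemma~\ref{lem:FlipMap} that each layering right-multiplies the monodromy by one of $\sigma_2^{\pm1},(\sigma_2\sigma_1)^{\pm1}$, and transcribe a length-$l$ word into $l$ layerings. The paper's proof is a one-paragraph sketch that simply cites Birman--Hilden and Lemma~\ref{lem:FlipMap}; you have spelled out the bookkeeping (the explicit edge-to-generator dictionary, the one-vertex check, the labeling ambiguity) that the paper leaves implicit, but there is no difference in strategy.
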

\begin{proof}
By Birman--Hilden \cite{BH}, under the double branched cover from an annulus to a disk with two branched points, a Dehn twist doubly covers a half Dehn twist which corresponds to a standard generator of the braid group. Consider the double branched cover from $\mathbb T_1$ to the disk with three branched points, we can get a one-to-one correspondence between the mapping classes of $\mathbb T_1$ and the $3$--braids. (This correspondence is illustrated in \cite[Figure~9.15]{FM}.)
After labeling the three branched points appropriately, we may arrange so that $\tau_b$ corresponds to $\sigma_1$ and $\tau_a$ corresponds to $\sigma_2$. Our desired result then follows from Lemma~\ref{lem:FlipMap}.
\end{proof}

\begin{proof}[Proof of Proposition~\ref{prop:CBraid2n}]
The lower bound follows from Theorem~\ref{thm:JRT} and Proposition~\ref{prop:Braid2n}.

For the upper bound, we notice that $\sigma_2\sigma\sigma_2^{-1}$ is equal to
$$
(\sigma_2\sigma_1)\sigma_2^{-2a_1-1}(\sigma_2\sigma_1)\sigma_2^{-2a_2-1}\cdots
(\sigma_2\sigma_1)\sigma_2^{-2a_{2n-1}-1}(\sigma_2\sigma_1)\sigma_2^{-2a_{2n}-1}
$$
and apply Theorem~\ref{thm:OBgenus1}.
\end{proof}

Similarly, using Proposition~\ref{prop:Braid3}, we can prove Proposition~\ref{prop:CBraid3}.

\end{document}